\documentclass[11pt]{amsart}
\usepackage{amsmath,amssymb,amsthm,times}
\usepackage{microtype}
\usepackage[colorlinks=true]{hyperref}

\numberwithin{equation}{section}
\theoremstyle{plain}
\newtheorem{theorem}{Theorem}[section]
\newtheorem{lemma}[theorem]{Lemma}
\newtheorem{proposition}[theorem]{Proposition}
\newtheorem{corollary}[theorem]{Corollary}
\theoremstyle{definition}

\theoremstyle{remark}
\newtheorem{remark}[theorem]{Remark}

\allowdisplaybreaks[4]
\newcommand{\dd}{\,\mathrm{d}}
\newcommand{\Div}{\operatorname{div}}
\newcommand{\curl}{\operatorname{curl}}
\newcommand{\supp}{\operatorname{supp}}
\newcommand{\Acal}{\mathcal{A}}

\begin{document}
\title[Finite Potential Energy for Planar Ginzburg--Landau Solutions]
{Finite Potential Energy for Entire Solutions of the Planar Ginzburg--Landau Equation}

\author[H.G. Chen]{Hong-Ge Chen}
\address{Hong-Ge Chen, School of Mathematics and Statistics, Key Laboratory of Nonlinear Analysis \& Applications (Ministry of Education),
Central China Normal University, Wuhan, China}
\email{hongge\_chen@whu.edu.cn}

\author[J.C. Wei]{Juncheng Wei}
\address{Juncheng Wei, Department of Mathematics, Chinese University of Hong Kong, Shatin, New Territories, Hong Kong}
\email{wei@math.cuhk.edu.hk}

\author[H.C. Yan]{Haicheng Yan}
\address{Haicheng Yan, School of Mathematics and Statistics, Wuhan University, Wuhan, China}
\email{yanhaicheng@whu.edu.cn}

\author[W. Yang]{Wen Yang}
\address{Wen Yang, Department of Mathematics, Faculty of Science and Technology, University of Macau, Taipa, Macau, China}
\email{wenyang@um.edu.mo}

\begin{abstract}
We prove that every smooth entire solution $ u\colon\mathbb{R}^2\to\mathbb{R}^2  $ of the Ginzburg--Landau equation $  -\Delta u=u(1-|u|^2)  $ with $  |u(x)|\to1  $ as $  |x|\to\infty  $ has finite potential energy, i.e.,
\begin{equation*}
\int_{\mathbb{R}^2}(1-|u|^2)^2\dd x<+\infty,
\end{equation*}
thereby resolving Brezis' Open Problem 2.5 in \cite{BrezisProblems}. The main difficulty stems from the possible presence of a curl-free mode that carries nonzero circulation and decays only like $  |x|^{-1}  $; such a mode lies outside $  L^2  $ and does not admit a single-valued potential. By minimizing over $  L^2  $ gradient corrections, we construct a comparison field that solves the homogeneous equation and inherits the same circulation. The Kelvin inversion, combined with the De Giorgi--Nash--Moser theory for quasilinear elliptic equations, then produces the optimal decay $  O(|x|^{-1})  $. For a Ginzburg--Landau solution, the Bernstein estimate and the coercivity of the Jacobi form produce an $  L^2  $ forcing term in the exterior phase equation. The resulting $  L^4  $ bound on the phase field implies $1-|u|^2\in L^2(\mathbb{R}^2)$, and therefore the potential energy is finite.
\end{abstract}

\subjclass[2020]{35J47, 35J50, 35B40, 35B65}
\keywords{Ginzburg--Landau equation, entire solution, potential energy,
exterior phase, Jacobi field, Kelvin inversion, critical elliptic decay}

\maketitle

\section{Introduction}
The Ginzburg--Landau theory of superconductivity, introduced by Ginzburg and Landau in 1950, describes superconducting states through a complex order parameter whose zeros identify vortex cores.  In the nonmagnetic two-dimensional reduction, the energy of a map $v\colon\mathcal O\to\mathbb C$ on a domain $\mathcal O\subset\mathbb R^2$ is 
\[ \mathcal E_\varepsilon(v;\mathcal O)
 =\frac12\int_{\mathcal O}|\nabla v|^2\dd x
 +\frac1{4\varepsilon^2}
  \int_{\mathcal O}(1-|v|^2)^2\dd x,
\]
where $\varepsilon>0$ is the small coherence-length parameter.  For minimizers on a smooth bounded star-shaped planar domain with smooth $S^1$-valued Dirichlet data of degree $d>0$, Bethuel, Brezis, and H\'{e}lein \cite{BBH} proved that, along subsequences as $\varepsilon\to0$, the energy concentrates near $d$ distinct degree-one vortices and that their limiting configuration minimizes the associated renormalized energy. For $d<0$, the analogous conclusion follows by complex conjugation and involves $|d|$ vortices of degree $-1$. Pacard and Rivi\`{e}re \cite{PacardRiviere}, Del Pino, Kowalczyk and Musso \cite{DKM} (see also Del Pino, Juneman and Musso \cite{DJM})  subsequently developed the linear and nonlinear theory of such vortices, while Sandier and Serfaty \cite{SandierSerfaty} treated the corresponding magnetic model.

After the natural rescaling that eliminates $\varepsilon$, one is led to the entire equation
\begin{equation}\label{eq:GL-intro}
-\Delta u=u(1-|u|^2)
\quad\text{in }\mathbb{R}^2,
\qquad u\colon\mathbb R^2\to\mathbb R^2.
\end{equation}
We are interested in smooth solutions that satisfy 
\begin{equation}\label{eq:modulus-limit-intro}
|u(x)|\to1\qquad\text{as }|x|\to\infty.
\end{equation}
The condition \eqref{eq:modulus-limit-intro} guarantees that the topological degree at infinity, defined for any sufficiently large $R$ by
\[
\deg(u,\infty)
:=\deg\left(\frac{u}{|u|}\Big|_{\partial B_R(0)}\right)
\in\mathbb Z
\]
is well defined and independent of the choice of $R$. However, it does not provide any quantitative decay estimate for the modulus defect $1-|u|^2$, nor any a priori bound on the potential energy
\[
\mathcal P(u)
:=\int_{\mathbb R^2}(1-|u|^2)^2\,\dd x.
\]
The potential energy is the natural global quantity associated with \eqref{eq:GL-intro}: for every $q\in \mathbb{Z}\setminus \{0\}$, the classical equivariant vortex $V_q(re^{i\theta})=f_q(r)e^{iq\theta}$ has finite potential energy, whereas its Dirichlet energy diverges logarithmically on large disks.  Thus, within this class, the finiteness of $\mathcal P$ is strictly weaker than the finiteness of the full Ginzburg--Landau energy.

Brezis, Merle, and Rivi\`ere \cite[Theorem~1 and Remarks~1.1--1.2]{BMR} proved that every smooth entire solution satisfies the quantization alternative
\[
  \mathcal P(u)
  \in
  \bigl\{2\pi d^2:d\in\mathbb N\cup\{0\}\bigr\}
  \cup\{+\infty\}.
\]
When $\mathcal P(u) <\infty$, the corresponding integer is $d=|\deg(u,\infty)|$, and $d=0$ implies that $u$ is constant of unit
modulus.  Furthermore,  Shafrir \cite[Theorem~1]{Shafrir} obtained precise far-field asymptotics
\[
  1-|u(x)|^2
  =\frac{d^2}{|x|^2}+o(|x|^{-2})
  \qquad\text{as }|x|\to\infty,
\]
and
Mironescu \cite[Theorem~1]{Mironescu} classified all solutions with $\deg(u,\infty)=\pm1$.  On the variational side, Sandier
\cite[Theorem~1.1]{Sandier} proved that every local minimizer has finite potential energy without assuming
\eqref{eq:modulus-limit-intro}; this result is based instead on the additional hypothesis of local minimality.

Related classification, Liouville, and uniqueness results, as well as a priori estimates, have been obtained under finite-potential-energy, variational, or
bounded-domain hypotheses.  Farina \cite[Theorem~1.1]{FarinaClass} proved that, in dimensions $N=3$ and $4$, every complex-valued entire local minimizer satisfying $|u(x)|\to1$ as $|x|\to\infty$ is constant of unit modulus.  For arbitrary entire solutions $u\colon\mathbb R^N\to\mathbb R^M$, he proved the corresponding finite-potential-energy Liouville property when $N\geq4$ and $M\geq1$ \cite[Theorem~1.1]{FarinaTwo}, and when $N=3$ and $M=2$ \cite[Theorem~1.1]{FarinaLiouville}.

For bounded-domain variational problems, Ignat, Nguyen, Slastikov, and Zarnescu proved that, for every $\varepsilon>0$ and $N\geq7$, the radial degree-one vortex is the unique global minimizer in the unit ball with vortex boundary data, under the convexity and positivity assumptions on the potential specified in \cite[Theorem~1]{IgnatNguyenSlastikovZarnescuCR}. Under strict convexity of the potential and a one-sided condition on the boundary data, they subsequently characterized uniqueness and its failure \cite[Theorem~1.4]{IgnatNguyenSlastikovZarnescuUniq}.
Ignat, Nahon, and Nguyen proved that, for $4\leq N\leq6$, the radial vortex is the unique minimizer among gradient fields, under the convexity
assumptions imposed in \cite[Theorem~1]{IgnatNahonNguyen}. More recently, assuming $W(0)=0$, $W\geq0$, and continuous unit-length Dirichlet data, Ignat, Nguyen, Slastikov, and Zarnescu proved the maximum principle $|u_\varepsilon|\leq1$ for bounded critical points on bounded Lipschitz domains \cite[Theorems~1.1 and~1.3]{Ignat}. Under their additional structural assumptions on $W$, energy convergence to a continuous stationary harmonic map, and regular boundary data, they also obtained uniform Laplacian and $C^{1,\beta}$ estimates. These results rely on finite potential energy, variational assumptions, dimension-dependent structure, or bounded-domain hypotheses.  None of them settles whether an arbitrary entire planar solution satisfying only \eqref{eq:modulus-limit-intro} must have finite potential energy.

Returning to the entire planar setting, Brezis, Merle, and Rivi\`{e}re asked in \cite[Problem~2]{BMR} whether the equation together
with the condition at infinity forces $\mathcal P(u)<\infty$. Brezis later restated the question in \cite[Open Problem~2]{brezis} and \cite[Open Problem~2.5]{BrezisProblems}.  In the notation used here, the natural and long-standing question is:
\begin{equation*}
\mbox{\bf Assume that $u$ satisfies \eqref{eq:GL-intro} and \eqref{eq:modulus-limit-intro}. Does it follow that $\mathcal P(u)<\infty$?}   
\end{equation*}
The present paper answers this question affirmatively.

\begin{theorem}
\label{thm:main}
Let $u\in C^\infty(\mathbb{R}^2;\mathbb{R}^2)$ satisfy
\begin{equation}\label{eq:GL}
 -\Delta u=u(1-|u|^2)
 \quad\text{in }\mathbb{R}^2
\end{equation}
and
\begin{equation}\label{eq:modulus-limit}
 |u(x)|\to 1
 \quad\text{as }|x|\to\infty.
\end{equation}
Then
\begin{equation*}\label{eq:main-conclusion}
 \int_{\mathbb{R}^2}(1-|u|^2)^2\dd x<\infty.
\end{equation*}
\end{theorem}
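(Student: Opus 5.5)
Outside a large disk $B_{R_0}$ we have $|u|\ge 1/2$, so we write $u=\rho e^{i\varphi}$ with $\rho=|u|$ and a multivalued phase. The equation then splits into
\[
-\Delta\rho+\rho|\nabla\varphi|^2=\rho(1-\rho^2),\qquad \Div(\rho^2\nabla\varphi)=0\quad\text{in }\mathbb R^2\setminus B_{R_0}.
\]
Set $j:=\rho^2\nabla\varphi=u\wedge\nabla u$. This is a single-valued, divergence-free vector field with circulation $\oint_{\partial B_R}\nabla\varphi\cdot\tau=2\pi d$, where $d=\deg(u,\infty)$. The goal is to show $|\nabla\varphi|=O(|x|^{-1})$, or at least $\nabla\varphi\in L^4$ near infinity.

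Given that, we return to the modulus equation. Writing $\eta=1-\rho^2$, it reads
\[
-\Delta\eta+2\eta\rho^2=2\rho^2|\nabla\varphi|^2-2|\nabla\rho|^2 .
\]
Testing with cutoffs shows that $\eta$ is controlled in $L^2$ by $|\nabla\varphi|^2+|\nabla\rho|^2$. The zeroth-order term $2\rho^2\eta\approx 2\eta$ is coercive, so $\|\eta\|_{L^2(|x|>R)}\lesssim\||\nabla u|^2\|_{L^2(|x|>R)}+{}$boundary terms. Hence finiteness of $\int\eta^2$ reduces to $\nabla u\in L^4$ near infinity.

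Before that, we need the standard a priori bounds. Since $|u|$ is bounded (by the maximum principle on $|u|^2$ together with $|u|\to1$), elliptic regularity gives $|\nabla u|\le C$. Bernstein-type estimates for $|\nabla u|^2$, combined with $\eta\to0$, should then give $|\nabla\rho|\to0$ and $\Delta\rho$ small at infinity. We also expect a gradient bound of the form $|\nabla u|^2\lesssim\eta+\text{small}$ in the averaged sense. The hope is that $\|\nabla\rho\|_{L^4}$ can be absorbed by the coercive term, or controlled from the $\eta$-equation via interpolation.

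The main obstacle is decay of the phase gradient. We decompose the exterior field $\nabla\varphi$ into two parts:
\begin{itemize}
\item a comparison field $w$ with $\Div(\rho^2 w)=0$ and circulation $2\pi d$, chosen by minimizing $\int\rho^2|d\nabla\theta+\nabla\psi|^2$ over $L^2$ gradient corrections $\nabla\psi$, where $\theta$ is the polar angle;
\item a remainder $\nabla\varphi-w=\nabla h$, a single-valued gradient solving $\Div(\rho^2\nabla h)=0$.
\end{itemize}
This is the step where $d\ne0$ causes trouble: the circulation mode decays like $|x|^{-1}$, so it lies outside $L^2$. We treat $w$ by Kelvin inversion $x\mapsto x/|x|^2$, which turns the exterior problem into a divergence-form equation with bounded measurable coefficient $\rho^2\in[1/4,1]$ on a punctured disk. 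De Giorgi--Nash--Moser then gives H\"older continuity of the stream function, and a Caccioppoli/oscillation argument on dyadic annuli yields $|w|\lesssim|x|^{-1}$ in an averaged $L^2$ sense. Since $\rho^2\to1$, the coefficients are asymptotically constant. We can then upgrade to a pointwise bound using smoothness: $\rho$ has bounded derivatives, so interior Schauder estimates at unit scale apply.

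For the remainder $h$, we use the Jacobi-form coercivity: the second variation controls $\int\rho^2|\nabla h|^2$. This should give $\nabla h\in L^2$, which together with the unit-scale bounds gives $L^4$ via interpolation with the $L^\infty$ bound. Combining the two parts yields $|\nabla\varphi|^2\in L^2$ near infinity. Plugging this into the $\eta$-estimate gives $\int_{\mathbb R^2}(1-|u|^2)^2<\infty$.
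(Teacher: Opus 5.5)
There is a genuine gap at the decisive step, the decay of the phase gradient.

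\textbf{The remainder is not controlled.} The Jacobi coercivity, applied to cut-off translational fields $\chi\partial_ju=\chi(\partial_j\rho)n+\chi\rho\,(\partial_j\varphi)\,n^\perp$, bounds $\int\rho^2|\nabla(\chi\partial_j\varphi)|^2$. That is an $L^2$ bound on the \emph{second} derivatives of the phase. It says nothing about $\int\rho^2|\nabla h|^2$ for your remainder potential $h$. Applying it to $\chi h\,iu$ is tautological: $iu$ is a Jacobi field, so $Q_\Omega(\chi h\,iu)=\int\rho^2|\nabla(\chi h)|^2$. In two dimensions $D^2\varphi\in L^2$ does not give $\nabla\varphi\in L^4$; for instance $g=x_1(\log|x|)^{-1/2}$ has $D^2g\in L^2$ but $\nabla g\notin L^4$ near infinity. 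Moreover, $\nabla h\in L^2$ is equivalent to $\nabla\varphi-d\nabla\theta\in L^2$, which is at least as strong as what you are trying to prove.

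\textbf{The comparison field does not decay as claimed.} It is governed by the linear equation $\Div(\rho^2w)=0$, whose coefficient tends to $1$ at no known rate.
\begin{itemize}
\item The energy $\int\rho^2|d\nabla\theta+\nabla\psi|^2$ is infinite. Its renormalized linear part $-2d\int\eta\,\nabla\theta\cdot\nabla\psi$ is not known to be bounded on $L^2$ gradients without a decay rate for $\eta$.
\item After Kelvin inversion the coefficient is merely continuous at $0$, so Schauder is unavailable there.
\item The stream function contains $d\log|x|$, so it is unbounded at the puncture and De Giorgi--Nash--Moser gives no H\"older continuity there.
\item The components of $w$ do not solve a homogeneous equation, because differentiating produces $\Div(\partial_m(\rho^2)\,w)$.
\item Unit-scale Schauder estimates turn $\int_{\{R<|x|<2R\}}|w|^2\lesssim1$ only into $|w|=O(1)$. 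Reaching $|x|^{-1}$ needs estimates at scale $|x|$, i.e.\ scale-invariant regularity of $\rho^2$ such as $|\nabla\rho|\lesssim|x|^{-1}$, which you do not have.
\end{itemize}
Note also that $\nabla\varphi$ itself already solves your homogeneous linear problem with circulation $2\pi d$. Splitting off $w$ therefore only relocates the difficulty.

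\textbf{The missing idea.} Exploit what the Jacobi estimate does give, namely $D^2\rho\in L^2$. With $k:=\nabla\varphi$, the modulus equation gives $\sigma:=\eta-|k|^2=-\Delta\rho/\rho\in L^2$. Writing $\rho^2=1-|k|^2-\sigma$ turns the phase equation into $\Div\bigl((1-|k|^2)k-\sigma k\bigr)=0$. This is an \emph{autonomous} quasilinear equation with $L^2$ forcing $F=\sigma k$.

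A comparator $\ell$ is then obtained by minimizing the Taylor-remainder form of the convexly extended energy over $L^2$ gradient corrections. This functional is finite although $k\notin L^2$. It yields $\Div\widetilde A(\ell)=0$, the same circulation as $k$, and $\|k-\ell\|_{L^2}\le\lambda^{-1}\|F\|_{L^2}$.

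Autonomy is what lets each component satisfy $\Div\bigl(D\widetilde A(\ell)\nabla\ell_m\bigr)=0$. Hence $v(y)=\ell(y/|y|^2)$ is bounded, vanishes at $0$, and is $C^{0,\beta}$ there. The inverted coefficient $\mathcal R\,DA(v)\,\mathcal R$ differs from $I$ by $O(|v|^2)=O(|y|^{2\beta})$. This neutralizes the discontinuity of the reflection $\mathcal R$ at the origin and makes the coefficient H\"older there, so Schauder gives $|\ell|\le C|x|^{-1}$. Then $k=\ell+(k-\ell)\in L^4$ near infinity, and $\eta=|k|^2+\sigma\in L^2$.

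\textbf{Minor point.} Your reduction of $\eta\in L^2$ to $\nabla\varphi\in L^4$ is fine, but the correct identity is $-\Delta\eta+2\rho^2\eta=2|\nabla\rho|^2+2\rho^2|\nabla\varphi|^2$. The $|\nabla\rho|^2$ term is still absorbable, because $|\nabla\rho|^2=|\nabla\eta|^2/(4\rho^2)$ and $\eta$ is small.
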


Combining Theorem~\ref{thm:main} with the quantization result in \cite{BMR}, we obtain
\[
  \mathcal P(u)=2\pi\,\deg(u,\infty)^2.
\]
Within this class of solutions, Theorem~\ref{thm:main} also removes the assumption $\mathcal P(u)<\infty$ from the degree-zero rigidity
conclusion of \cite[Remark~1.2]{BMR} and from Mironescu's classification of entire solutions of degrees $\pm1$
\cite[Theorem~1]{Mironescu}.  This conclusion should be distinguished from Sandier's theorem on locally minimizing solutions
\cite[Theorem~1.1]{Sandier}: the present argument relies only on the equation and the condition at infinity and requires neither local minimality nor stability.  The classification problem for degrees with absolute value at least two remains open.

The proof of Theorem~\ref{thm:main} relies on a sharp exterior-phase estimate, stated as Theorem~\ref{thm:exterior-phase} below. This estimate is of independent interest; it is formulated on exterior domains and permits an $  L^2  $ forcing term in an autonomous phase equation.

\begin{theorem}
\label{thm:exterior-phase}
Let $r_0>0$ and
\[
 E:=\mathbb{R}^2\setminus\overline{B_{r_0}(0)}
   =\{x\in\mathbb{R}^2:|x|>r_0\}.
\]
Assume that $k\in C^\infty(\mathcal U;\mathbb{R}^2)$ for some open
neighborhood $\mathcal U$ of $\overline E$.  Let
$F\in L^2(E;\mathbb{R}^2)$, and suppose that
\begin{equation}\label{eq:phase-lemma-assumptions}
 \curl k=0,
 \qquad
 \Div\bigl((1-|k|^2)k-F\bigr)=0
 \quad\text{in }\mathcal D'(E),
\end{equation}
and that
\begin{equation}\label{eq:k-uniform-zero}
 \lim_{R\to\infty}
 \sup_{\substack{x\in E\\ |x|>R}}|k(x)|=0.
\end{equation}
Then
\begin{equation}\label{eq:phase-prop-conclusion}
 k\in L^{\gamma}(E;\mathbb{R}^2)
 \quad\forall\,\gamma>2.
\end{equation}
\end{theorem}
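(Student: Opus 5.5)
The plan follows the abstract's strategy: split $k$ into a topological (circulation) part and an exact $L^2$-correctable part, build a comparison field for the homogeneous equation carrying the same circulation, and get decay by Kelvin inversion plus De Giorgi--Nash--Moser.

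\emph{Step 1: circulation and reduction to an exact part.} Since $\curl k=0$ on the annular domain $E$, the circulation $\kappa:=\oint_{\partial B_r}k\cdot\tau\,\mathrm{d}s$ is independent of $r>r_0$. Write $k=\kappa\,\frac{x^\perp}{2\pi|x|^2}+\nabla\phi$ with $\phi$ single-valued and smooth on $\mathcal U$. Note that $x^\perp/|x|^2\in L^\gamma(E)$ for every $\gamma>2$ but not for $\gamma=2$; this is the obstruction described in the abstract.

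\emph{Step 2: comparison field.} Define $\Phi(k)=\int\bigl(\tfrac12|k|^2-\tfrac14|k|^4\bigr)$, whose Euler--Lagrange equation under gradient variations is $\Div((1-|k|^2)k)=0$. Take $h=\kappa\frac{x^\perp}{2\pi|x|^2}+\nabla\psi$ on $\{|x|>R_0\}$, where $\psi$ minimizes $\Phi$ over $\psi\in\dot H^1$ with suitable boundary data. For large $R_0$ all fields have small modulus, so the functional is uniformly convex on the admissible set, since $1-3|k|^2\ge\tfrac12$. This gives a unique minimizer with $\nabla\psi\in L^2$. Then $h$ solves the homogeneous equation with circulation $\kappa$. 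Because $h$ is curl-free, locally $h=\nabla H$, and $H$ solves a uniformly elliptic quasilinear equation.

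\emph{Step 3: decay of $h$.} Apply the Kelvin transform $y=x/|x|^2$. The $L^2$ part $\nabla\psi$ pulls back to a finite-energy function in a punctured disk, and the circulation part becomes the multivalued harmonic angle. The equation for $\psi$ is $\Div(a(x)\nabla\psi)=\Div(\text{explicit }O(|x|^{-3}))$, where $a=1-|h|^2$ is treated as a bounded measurable coefficient in a linearized form. Transformed, it is a uniformly elliptic divergence-form equation in $B_1\setminus\{0\}$. The puncture is removable for finite-energy solutions. De Giorgi--Nash--Moser then gives Hölder continuity at $0$, and a Caccioppoli/Campanato argument upgrades this to $|\nabla\psi(x)|=O(|x|^{-1})$. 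Hence $|h|=O(|x|^{-1})$, so $h\in L^\gamma$ for all $\gamma>2$.

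\emph{Step 4: comparison of $k$ with $h$.} Set $w:=k-h=\nabla(\phi-\psi)$, which is single-valued. Subtracting the two equations gives
\[
\Div\bigl(A(x)w\bigr)=\Div F,\qquad A=\int_0^1 D\bigl[(1-|p|^2)p\bigr]\big|_{p=h+t w}\,\mathrm{d}t .
\]
Here $A$ is symmetric and uniformly elliptic on $|x|>R_0$, by \eqref{eq:k-uniform-zero} and the decay of $h$. Testing with $(\phi-\psi)$ times a cutoff, and using the energy estimate $\|w\|_{L^2}\lesssim\|F\|_{L^2}$ plus boundary terms, yields $w\in L^2(\{|x|>R_0\})$. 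Since $w$ is bounded, $w\in L^\gamma$ for all $\gamma\ge2$. Therefore $k=h+w\in L^\gamma$ for every $\gamma>2$ on the far region, and on the compact part $r_0<|x|\le R_0$ boundedness of $k$ suffices.

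\emph{Main obstacle.} The hardest step is the energy estimate in Step 4. The potential $\phi-\psi$ need not be globally controlled, so the cutoff argument must control the boundary terms on large circles. I would first try choosing $\psi$'s boundary data equal to $\phi$ on $|x|=R_0$. I would then use cutoffs $\eta_R$ with $|\nabla\eta_R|\lesssim 1/R$ and subtract circle averages of $\phi-\psi$, via Poincaré on annuli. The goal is a hole-filling inequality
\[
\int_{B_R}|w|^2\le C\|F\|_2^2+\theta\int_{B_{2R}}|w|^2,\qquad \theta<1,
\]
which forces $w\in L^2$ provided one first knows $\int_{B_R}|w|^2$ grows at most polynomially. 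That polynomial growth holds here because $w$ is bounded.
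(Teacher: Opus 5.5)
Your Step 4 has a genuine gap: the estimate $\|k-h\|_{L^2}\lesssim\|F\|_{L^2}$ is false in general. The reason is that your comparator matches the circulation of $k$ but not its \emph{flux}.

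\emph{Why $h$ has zero flux.} Since $T=x^\perp/|x|^2$ is tangential to circles and $\nabla\psi\in L^2$, the average of $\oint_{\partial B_r}\widetilde A(h)\cdot\nu$ over $r\in(R,2R)$ is bounded by $C\|\nabla\psi\|_{L^2(\Acal(R,2R))}\to0$. The same averaging shows that $k-h\in L^2$ would force the conserved quantity $\oint_{\partial B_r}\bigl((1-|k|^2)k-F\bigr)\cdot\nu$ to vanish. The hypotheses of Theorem~\ref{thm:exterior-phase} do not imply this.

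\emph{Counterexample.} Take $F=0$ and $k=f(|x|)\,x/|x|$ with $|x|\,f(1-f^2)=a\neq0$, so $f\sim a/|x|$. All hypotheses hold and $\kappa=0$. Since $\phi$ is radial, your boundary data make $\psi$ constant, so $h\equiv0$ and $w=k\notin L^2$ although $F=0$. Any other boundary data still give a flux-free $h$.

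\emph{Where the hole-filling sketch fails.} Once you subtract the annular mean $c_R$, the test function equals $-c_R$ on $|x|=R_0$. This produces a boundary term $c_R\times(\text{flux})$ with $c_R\sim a\log R$. Independently, iterating $a_R\le\theta a_{2R}+K$ yields boundedness only if $a_R=o(R^{\log_2(1/\theta)})$. Since $\theta=C/(1+C)$ is not small, the $O(R^2)$ growth coming from boundedness of $w$ is far from enough. In the Ginzburg--Landau application the flux does vanish, because $(Ju)\cdot\partial_a u$ is divergence-free on all of $\mathbb R^2$. The theorem does not assume this, however, and even with zero flux the hole-filling step as written does not close.

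\emph{Earlier steps also need repair.} First, $W(p)=\frac12|p|^2-\frac14|p|^4$ is unbounded below, so minimizing $\Phi$ over $\dot H^1$ is ill-posed. You cannot presuppose smallness of $|h|$ for competitors; you need a convex extension $\widetilde W$ as in Lemma~\ref{lem:convex-extension}, plus a renormalization since $T\notin L^2$. Second, the right-hand side in Step 3 is $\Div\bigl(\frac{\kappa}{2\pi}|h|^2T\bigr)$, which is $O(|x|^{-3})$ only after decay is known. In fact your decomposition makes the decay of $h$ easy. Apply the scale-invariant bound $\|h\|_{L^\infty(B_\rho(x_0))}\le C\rho^{-1}\|h\|_{L^2(B_{2\rho}(x_0))}$ to the components of $h$ with $\rho=|x_0|/8$. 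Since $\|T\|_{L^2}$ over such balls is bounded, this gives $|h(x_0)|\le C/|x_0|$. But this only moves the whole difficulty into Step 4.

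\emph{What the paper does instead.} It reverses the roles. It sets $\ell=k-w$, where $w$ minimizes $\int(\mathcal R_k(w)-F\cdot w)$ over the $L^2$-closure $\mathcal G(V)$ of $\{\nabla\varphi:\varphi\in C_c^\infty(V)\}$, for the convexified potential. Testing the Euler--Lagrange equation with $w$ itself gives $\|k-\ell\|_{L^2}\le\lambda^{-1}\|F\|_{L^2}$ with no boundary terms. Moreover, $\ell$ automatically inherits both the circulation and the flux of $k$. The price is that only $\ell\to0$ uniformly is known a priori. The $O(|x|^{-1})$ decay then comes from Kelvin-inverting the components $\ell_m$, removing the puncture, and applying De Giorgi--Nash--Moser. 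A final Schauder step works because $DA(p)-I=O(|p|^2)$ makes the inverted coefficient H\"older at the origin.
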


\noindent {\bf Remark}: The restriction $\gamma>2$ is sharp; the circulation example and the endpoint failure are provided in Remark~\ref{rem:phase-sharp}.
\medskip

The major difficulty addressed by Theorem~\ref{thm:exterior-phase} is the possible presence of a mode of order $|x|^{-1}$ that carries nonzero circulation. This mode
lies outside $L^2$ and does not admit a single-valued potential. To overcome this
difficulty, we employ a global variational comparison inspired by classical
work on exterior flows and nonlinear Hodge theory. Shiffman introduced a
globally regular convex extension and renormalized variational problems in
exterior domains that incorporate a logarithmic singularity carrying
prescribed circulation \cite[Sections~3--4, pp.~612--616, and Section~16,
pp.~643--647]{Shiffman}. Sibner and Sibner developed a nonlinear Hodge theory
with prescribed periods in an $L^2$ setting, including the regular case on
noncompact manifolds \cite[Sections~1.3 and 3.2]{SibnerSibner1970}. Dong and
Ou formulated a disturbance-energy approach to exterior potential flow in
dimensions $n\geq 3$ and highlighted the special functional difficulties that
arise in dimension two \cite[pp.~358--359 and p.~365]{DongOu1993}. Building on
these ideas, we construct a homogeneous comparator $\ell$ that inherits the
circulation of $k$ and satisfies
\begin{equation*}
\|k-\ell\|_{L^2(E)}\leq C\|F\|_{L^2(E)}.
\end{equation*}
The construction allows the circulation-carrying fields $k$ and $\ell$
themselves to lie outside $L^2(E)$.

The comparison is performed as follows. Since $A(p)=(1-|p|^2)p$ is not monotone for large $|p|$, using Lemma~\ref{lem:convex-extension}, we replace $A$ outside a small ball in the target space by $\widetilde A=\nabla\widetilde W$, where
$\lambda I\leq D\widetilde A\leq I$ for some $\lambda>0$. On the exterior
domain $E$ one sets
\begin{equation*}
\mathcal G(E):=
\overline{\{\nabla\phi:\phi\in C_c^\infty(E)\}}^{L^2(E;\mathbb R^2)}
\end{equation*}
and minimizes the renormalized functional
\begin{equation*}
\mathcal J(w)=\int_E\bigl(\mathcal R_k(w)-F\cdot w\bigr)\,dx
\end{equation*}
over $\mathcal G(E)$; see Proposition~\ref{prop:global-variational-comparison}.
Here, $\mathcal R_k(w)$ is the Taylor remainder of $\widetilde W$ along the
segment from $k$ to $k-w$. The Hessian bounds imply that $\mathcal J$ is strictly convex and coercive. Since  $\mathcal J$ is also weakly lower semicontinuous on the closed subspace $\mathcal G(E)$, the direct method gives a unique minimizer. The corresponding Euler--Lagrange equation is
\begin{equation*}
\int_E\bigl(\widetilde A(k)-\widetilde A(k-w)-F\bigr)\cdot h\,dx=0
\qquad\text{for every }h\in\mathcal G(E).
\end{equation*}
With $\ell:=k-w$, Proposition~\ref{prop:global-variational-comparison} gives
$\operatorname{div}\widetilde A(\ell)=0$, $\operatorname{curl}\ell=0$, and
$\|k-\ell\|_{L^2}\leq C\|F\|_{L^2}$. The uniform decay of $\ell$ follows by
combining a local estimate with the vanishing $L^2$ tail of $w$ and the uniform
decay of $k$. Moreover, Lemma~\ref{lem:weak-circulation} shows that the curl-free  $L^2$ correction $w$ has zero circulation along every circle in $E$, so $\ell$ and $k$ share the same circulation.

The difference-quotient argument then shows that each component of $\ell$
satisfies the uniformly elliptic equation
\begin{equation*}
\operatorname{div}\bigl(D\widetilde A(\ell)\nabla\ell_m\bigr)=0.
\end{equation*}
The Kelvin inversion $x\mapsto x/|x|^2$ converts the exterior equation into a
divergence-form equation on a punctured disk. The matrix of reflected coefficients 
remains symmetric and uniformly elliptic. Since $\ell$ tends uniformly to zero,
the inverted field $v$ is continuously extended by $v(0)=0$.
Lemma~\ref{lem:planar-puncture-removal}, proved with logarithmic cutoffs, gives
$v\in W^{1,2}_{\mathrm{loc}}$ across the origin and the weak equation on the
full disk. The De Giorgi--Nash--Moser theory first yields $v(y)=O(|y|^\beta)$ for
some $\beta\in(0,1)$. Because $\widetilde A=A$ near the origin and
\begin{equation*}
DA(p)-I=-|p|^2 I-2p\otimes p,
\end{equation*}
the inverted coefficient matrix extends as a $C^{0,\beta}$ field at the origin.
Interior Schauder estimates then give $v\in C^{1,\beta}$ locally and hence
$|v(y)|\leq C|y|$. Therefore, Lemma~\ref{lem:critical-decay} produces the
critical exterior decay
\begin{equation*}
|\ell(x)|\leq\frac{C}{|x|}
\end{equation*}
for all sufficiently large $|x|$. In particular, $\ell\in L^\gamma(E)$ for every
$\gamma>2$, which completes the proof of Theorem~\ref{thm:exterior-phase}.
The same strategy was previously used by two of the authors to obtain the decay rate of the KP-I
lump solution; see \cite{lwxy}.

Having established this general exterior decay result, we now apply it to the
Ginzburg--Landau equation. The Bernstein estimate
\cite[Theorem~3.5]{Smyrnelis} gives the following:
\begin{equation*}
|\nabla u|^2\leq 1-|u|^2\qquad\text{in }\mathbb R^2.
\end{equation*}
In a sufficiently large exterior domain we write $u=\rho n$,  where $\rho\geq\frac12$ and $n$ is $S^1$-valued, and let $k$ be the globally defined closed one-form associated with $n$. Applying the coercivity estimate for the Jacobi form to cut-off translational fields
yields 
\begin{equation*}
\nabla\rho,\quad D^2\rho,\quad\nabla k\in L^2.
\end{equation*}
Consequently,
\begin{equation*}
\sigma:=(1-|u|^2)-|k|^2=-\frac{\Delta\rho}{\rho}
\end{equation*}
belongs to $L^2$. The phase equation then takes the autonomous form required by
Theorem~\ref{thm:exterior-phase}, with forcing $F=\sigma k\in L^2$. Moreover,
$k(x)\to 0$ uniformly as $|x|\to\infty$. The exterior theorem therefore gives
$k\in L^4$, since $1-|u|^2=|k|^2+\sigma$ and $\sigma\in L^2(\Omega)$, it follows that $1-|u|^2\in L^2(\Omega)$. Hence, the potential energy is finite, which is precisely the assertion of
Theorem~\ref{thm:main}.

The paper is organized as follows. Section~\ref{sec:exterior-phase-theorem}
establishes the exterior phase theorem by constructing the global comparator
and deriving its critical decay via Kelvin inversion and elliptic regularity.
Section~\ref{sec:ginzburg-landau} recalls and applies the Bernstein estimate,
derives the Jacobi coercivity estimate, constructs the exterior phase, and
completes the proof of Theorem~\ref{thm:main}.

\bigskip
\noindent
\textbf{Notation}: Throughout the paper, $C> 0$ denotes a generic constant that may change from line to line. Moreover, $B_r(x)$ denotes the open ball in $\mathbb R^2$ centered at the point $x$ with radius $r$.

\section{An exterior phase theorem}
\label{sec:exterior-phase-theorem}

We prove Theorem~\ref{thm:exterior-phase} by constructing the homogeneous
 comparator in Proposition~\ref{prop:global-variational-comparison} and establishing its
$|x|^{-1}$ decay in Lemma~\ref{lem:critical-decay}.

For $x\in\mathbb{R}^2\setminus\{0\}$, let
$\tau(x):=(-x_2,x_1)/|x|$.
We write
\begin{equation}\label{eq:autonomous-flux-potential}
 A(p):=(1-|p|^2)p,
 \qquad
 W(p):=\frac12|p|^2-\frac14|p|^4,
 \qquad \forall\,p\in\mathbb{R}^2,
\end{equation}
so that $A=\nabla W$.  The radial and tangential eigenvalues of
$D^2W(p)$ are $1-3|p|^2$ and $1-|p|^2$, respectively. Thus, $W$ is
strictly convex on $\{p\in\mathbb{R}^2:|p|<1/\sqrt3\}$, but is not
convex on $\mathbb{R}^2$.

\begin{lemma}
\label{lem:convex-extension}
Let $0<\delta<\frac14$. Then there exist
$\widetilde W\in C^\infty(\mathbb{R}^2)$,
$\widetilde A:=\nabla\widetilde W$, and
$\lambda:=1-12\delta^2>0$ such that
\[
 \widetilde W(p)=W(p),
 \qquad
 \widetilde A(p)=A(p)
 \quad\forall\,p\in\mathbb{R}^2\text{ with }|p|\leq\delta,
\]
and
\begin{equation}\label{eq:convex-extension-hessian-statement}
 \lambda|\xi|^2
 \leq D^2\widetilde W(p)\xi\cdot\xi
 \leq|\xi|^2
 \qquad\forall\,p,\xi\in\mathbb{R}^2.
\end{equation}
\end{lemma}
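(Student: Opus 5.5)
The plan is to take $\widetilde W$ radial, $\widetilde W(p)=\varphi(|p|)$, and to build it by prescribing the radial second derivative $\varphi''$ directly. For a smooth radial function the Hessian at $p\neq0$ has eigenvalue $\varphi''(r)$ in the radial direction and $\varphi'(r)/r$ in the tangential direction, where $r=|p|$. So \eqref{eq:convex-extension-hessian-statement} reduces to two scalar conditions: both $\varphi''$ and $\varphi'/r$ must take values in $[\lambda,1]$. The choice $\lambda=1-12\delta^2$ is forced by the radial eigenvalue $1-3r^2$ of $D^2W$, evaluated at $r=2\delta$. This suggests letting the profile follow $W$ up to radius about $\delta$ and then freezing the defect $3r^2$ at a value at most $12\delta^2$.

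\emph{Step 1 (the profile).} Fix $h\in C^\infty([0,\infty))$ with
\[
h(s)=s \text{ for } 0\le s\le\delta^2,\qquad h'\ge0,\qquad 0\le h\le 4\delta^2 .
\]
Such an $h$ exists, for instance by smoothly capping $s\mapsto s$ somewhere in $[\delta^2,4\delta^2]$. Set
\[
\psi(r):=1-3h(r^2),\qquad \varphi(r):=\int_0^r\!\!\int_0^t\psi(\tau)\,\dd\tau\,\dd t .
\]
Then $\lambda=1-12\delta^2\le\psi\le1$, and $\lambda>0$ because $\delta<\tfrac14$. On $[0,\delta]$ we have $\psi=1-3r^2$ and $\varphi(0)=\varphi'(0)=0$, so $\varphi(r)=\tfrac12r^2-\tfrac14r^4$ there. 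Hence $\widetilde W=W$ and $\widetilde A=A$ on $\{|p|\le\delta\}$.

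\emph{Step 2 (Hessian bounds).} By construction $\varphi''=\psi\in[\lambda,1]$. The tangential eigenvalue is the average
\[
\frac{\varphi'(r)}{r}=\frac1r\int_0^r\psi,
\]
which also lies in $[\lambda,1]$. This gives \eqref{eq:convex-extension-hessian-statement} at every $p\neq0$. At $p=0$ the Hessian is $\psi(0)I=I$ by continuity.

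\emph{Step 3 (smoothness at the origin).} This is the only point needing care, and it is mild. Since $\psi$ is a smooth function of $r^2$, it is the restriction of a smooth even function on $\mathbb R$. Then $\varphi'$ is odd and $\varphi$ is even and smooth. By the classical fact that an even smooth function is $\Phi(r^2)$ with $\Phi$ smooth (Whitney), we get $\widetilde W(p)=\Phi(|p|^2)\in C^\infty(\mathbb R^2)$.

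Alternatively, one can avoid Whitney's theorem by writing $\widetilde W(p)=\Phi(|p|^2)$ directly, with $\Phi$ determined by $\Phi(0)=0$ and $2\Phi'(s)=\frac1{\sqrt s}\int_0^{\sqrt s}\psi$. After substituting $\tau=\sqrt s\,\theta$, this becomes $2\Phi'(s)=\int_0^1(1-3h(s\theta^2))\,\dd\theta$, which is manifestly smooth in $s$. Either route completes the proof.
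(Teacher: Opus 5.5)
Your proposal is correct and follows essentially the paper's route: a radial $\widetilde W(p)=\varphi(|p|)$ whose $\varphi''$ equals $1-3r^2$ near $0$ and stays in $[\lambda,1]$, with the tangential eigenvalue $\varphi'(r)/r$ controlled as an average of $\varphi''$. The paper uses $1-3s^2\vartheta(s)$ with a cutoff $\vartheta$ where you use the capped $h(r^2)$, and your Step 3 is more than needed: $\widetilde W$ equals the polynomial $W$ on $B_\delta(0)$, and $|p|$ is smooth away from the origin, which is how the paper obtains smoothness.
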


\begin{proof}
Choose $\vartheta\in C^\infty([0,\infty))$ such that
$0\leq\vartheta\leq1$ and
\[
 \vartheta(s)=1\quad\forall\,s\in[0,\delta],
 \qquad
 \vartheta(s)=0\quad\forall\,s\in[2\delta,\infty).
\]
Set
\[
 \mathfrak b(s):=1-3s^2\vartheta(s),
 \qquad
 \mathfrak a(s):=\int_0^s\mathfrak b(t)\dd t.
\]
On the support of $\vartheta$ one has $s\leq2\delta$, and hence $\lambda\leq\mathfrak b(s)\leq1$ for all $s\in[0,\infty)$. Define
\[
 \widetilde W(p):=\int_0^{|p|}\mathfrak a(s)\dd s.
\]
For $0\leq s\leq\delta$, we have 
$\mathfrak a(s)=s-s^3$.  Therefore, for every
$p\in\mathbb{R}^2$ with $|p|\leq\delta$, we obtain 
\[
 \widetilde W(p)
 =\int_0^{|p|}(s-s^3)\dd s
 =\frac12|p|^2-\frac14|p|^4
 =W(p).
\]
In particular, $\widetilde W=W$ on
$B_\delta(0)$ and is smooth at $p=0$.

For $p\neq0$, the radial and tangential eigenvalues of
$D^2\widetilde W(p)$ are
\[
 \mathfrak b(|p|)
 \quad\text{and}\quad
 \frac{\mathfrak a(|p|)}{|p|}
 =\frac1{|p|}\int_0^{|p|}\mathfrak b(s)\dd s,
\]
respectively.  Both lie in $[\lambda,1]$, and the same bounds hold at
the origin by continuity.  This proves
\eqref{eq:convex-extension-hessian-statement}.  Integrating the Hessian
bounds along line segments also gives
\begin{equation}\label{eq:extension-lipschitz-monotone}
 \begin{aligned}
 |\widetilde A(p)-\widetilde A(q)|
 &\leq |p-q|,\\
 \bigl(\widetilde A(p)-\widetilde A(q)\bigr)\cdot(p-q)
 &\geq\lambda|p-q|^2
 \end{aligned}
 \qquad\forall\,p,q\in\mathbb{R}^2.
\end{equation}
\end{proof}

Next we present two interior estimates used below.  For $N\in\mathbb{N}^{+}$,
let $U\subset\mathbb{R}^2$ be an open subset, let
$f\colon U\to\mathbb{R}^N$, and let $\alpha\in(0,1)$.  We use the
H\"older seminorm
\[
 [f]_{C^{0,\alpha}(U;\mathbb{R}^N)}
 :=\sup_{\substack{x,y\in U\\x\neq y}}
 \frac{|f(x)-f(y)|}{|x-y|^\alpha}.
\]
For scalar-valued maps $  f  $, the target space is omitted and the seminorm is denoted simply by $  [f]_{C^{0,\alpha}(U)}  $. For a matrix $  M=(M_{ij})\in\mathbb{R}^{2\times 2}  $, we write $  |M| := \bigl(\sum_{i,j=1}^2 |M_{ij}|^2\bigr)^{1/2}  $ for its Frobenius norm. The same definition of the Hölder seminorm is used for matrix-valued maps, with the norm understood in the Frobenius sense. We now recall the following classical interior elliptic regularity estimates used in this section.

\begin{lemma}
\label{lem:interior-elliptic-estimates}
Let $x_0\in\mathbb{R}^2$ and $r>0$, and let $0<\lambda<\Lambda<+\infty$, and let
$G\in L^\infty(B_{2r}(x_0);\mathbb{R}^{2\times2})$ be symmetric and
satisfy
\[
 \lambda|\xi|^2\leq G(x)\xi\cdot\xi\leq\Lambda|\xi|^2
 \quad\text{for a.e. }x\in B_{2r}(x_0)
 \quad\forall\,\xi\in\mathbb{R}^2.
\]
If $q\in W^{1,2}(B_{2r}(x_0))$ and
\[
 \Div(G\nabla q)=0
 \quad\text{in }\mathcal D'(B_{2r}(x_0)),
\]
then there are $\alpha=\alpha(\lambda,\Lambda)\in(0,1)$ and
$C=C(\lambda,\Lambda)$ such that $q\in C^{0,\alpha}(B_r(x_0))$ and
\begin{equation}\label{eq:interior-DGN-estimate}
 \|q\|_{L^\infty(B_r(x_0))}
 +r^\alpha[q]_{C^{0,\alpha}(B_r(x_0))}
 \leq\frac{C}{r}\|q\|_{L^2(B_{2r}(x_0))}.
\end{equation}
In addition, if 
$G\in C^{0,\alpha}(B_{2r}(x_0);\mathbb{R}^{2\times2})$, then
$q\in C^{1,\alpha}(B_r(x_0))$ and
\begin{equation}\label{eq:interior-Schauder-estimate}
 r\|\nabla q\|_{L^\infty(B_r(x_0);\mathbb{R}^2)}
 +r^{1+\alpha}[\nabla q]_{C^{0,\alpha}
 (B_r(x_0);\mathbb{R}^2)}
 \leq C \|q\|_{L^\infty(B_{2r}(x_0))},
\end{equation}
where $C>0$ depends only on  $\lambda$,
$\Lambda$, $\alpha$, and 
$ r^\alpha[G]_{C^{0,\alpha}
 (B_{2r}(x_0);\mathbb{R}^{2\times2})}$. 
\end{lemma}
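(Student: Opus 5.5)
This lemma collects two classical facts, and the plan is to reduce both to the unit-scale case by scaling and then invoke the standard theorems. Set $\tilde q(z):=q(x_0+rz)$ and $\tilde G(z):=G(x_0+rz)$ for $z\in B_2(0)$. Then $\Div(\tilde G\nabla\tilde q)=0$ in $\mathcal D'(B_2(0))$, and $\tilde G$ satisfies the same ellipticity bounds $\lambda,\Lambda$. The norms transform as
\[
\|\tilde q\|_{L^2(B_2)}=r^{-1}\|q\|_{L^2(B_{2r}(x_0))},\qquad
[\tilde q]_{C^{0,\alpha}(B_1)}=r^\alpha[q]_{C^{0,\alpha}(B_r(x_0))},
\]
\[
\|\nabla\tilde q\|_{L^\infty(B_1)}=r\|\nabla q\|_{L^\infty(B_r(x_0))},\qquad
[\nabla\tilde q]_{C^{0,\alpha}(B_1)}=r^{1+\alpha}[\nabla q]_{C^{0,\alpha}(B_r(x_0))},
\]
\[
[\tilde G]_{C^{0,\alpha}(B_2)}=r^\alpha[G]_{C^{0,\alpha}(B_{2r}(x_0))}.
\]
Hence both \eqref{eq:interior-DGN-estimate} and \eqref{eq:interior-Schauder-estimate} reduce to the case $x_0=0$, $r=1$. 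This also explains why the Schauder constant depends on the scale-invariant quantity $r^\alpha[G]_{C^{0,\alpha}}$.

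At unit scale, the first estimate is the De Giorgi--Nash--Moser theorem for weak solutions of divergence-form equations with bounded measurable uniformly elliptic coefficients. Moser iteration (or De Giorgi's truncation) gives the local maximum principle
\[
\sup_{B_{3/2}}|\tilde q|\le C\|\tilde q\|_{L^2(B_2)},
\]
applied to both $\tilde q$ and $-\tilde q$. The oscillation decay lemma, derived from the weak Harnack inequality, gives
\[
\operatorname{osc}_{B_\rho(z)}\tilde q\le C\rho^\alpha\sup_{B_{1/2}(z)}|\tilde q|
\]
for $z\in B_1$ and $\rho\le 1/2$. Here $\alpha$ and $C$ depend only on $\lambda$ and $\Lambda$; no dimension dependence remains, since the dimension is fixed at $2$. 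Combining these two bounds yields the $C^{0,\alpha}(B_1)$ estimate.

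For the second estimate I would cite the interior Schauder (Campanato-type) estimate for divergence-form equations with $C^{0,\alpha}$ coefficients, as in Gilbarg--Trudinger Theorem 8.32 or Giaquinta's freezing-coefficient argument. It gives
\[
\|\tilde q\|_{C^{1,\alpha}(B_1)}\le C\|\tilde q\|_{L^\infty(B_{3/2})}
\]
with $C$ depending on $\lambda$, $\Lambda$, $\alpha$, and $[\tilde G]_{C^{0,\alpha}}$. If that theorem is stated with a $C^{0,\alpha}$ norm rather than a seminorm, the extra term is harmless, because $|\tilde G|\le C\Lambda$ from ellipticity and symmetry. The input $\tilde q\in W^{1,2}$ is enough by the cited theorem. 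The $L^\infty$ bound on $B_{3/2}\subset B_2$ is controlled directly, so no covering argument is needed beyond nesting the balls.

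The only point requiring care, and so the main obstacle, is the dependence of constants. The Hölder exponent in the Schauder part should be the same $\alpha$ as in the first part, or any $\alpha\in(0,1)$ at which $G$ happens to be Hölder; the Schauder theory permits any exponent in $(0,1)$. In the scaling step, I would also check that the scaled coefficient seminorm is exactly the quantity listed in the statement.
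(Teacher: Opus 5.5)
Your proposal is correct and follows the paper's route. The paper simply cites the classical De Giorgi--Nash--Moser and Schauder theory for scalar divergence-form equations (Gilbarg--Trudinger, Chapter 8), and your rescaling to $x_0=0$, $r=1$ makes explicit why the constants depend only on $\lambda$, $\Lambda$, $\alpha$ and $r^\alpha[G]_{C^{0,\alpha}}$. One small citation caveat: Gilbarg--Trudinger Theorem 8.32 is an a priori estimate that assumes the solution is already $C^{1,\alpha}$, so the step $W^{1,2}\Rightarrow C^{1,\alpha}$ should be credited to the Campanato/freezing-coefficient argument you also mention, or to the regularity results that follow Theorem 8.34 there.
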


\begin{proof}
Although the equation is vectorial in appearance, the asserted regularity follows componentwise from the classical De Giorgi--Nash--Moser and Schauder theories for scalar uniformly elliptic equations in divergence form. We refer the reader to \cite[Chapter 8]{GT} for the detailed arguments.
\end{proof}
 
Next, we state a circulation lemma for weakly curl-free fields.

\begin{lemma}
\label{lem:weak-circulation}
Let $r>0$, set $V:=\{x\in\mathbb{R}^2:|x|>r\}$, and suppose that
\[
 w\in W^{1,2}_{\mathrm{loc}}(V;\mathbb{R}^2)
   \cap L^2(V;\mathbb{R}^2),
 \qquad
 \curl w=0\quad\text{in }\mathcal D'(V).
\]
For $s>r$, let
$\operatorname{Tr}_s w\in H^{1/2}(\partial B_s(0);\mathbb{R}^2)$
denote the Sobolev trace of $w$ on $\partial B_s(0)$.  Then
\[
 \int_{\partial B_s(0)}
 (\operatorname{Tr}_s w)\cdot\tau\,\dd\mathcal H^1=0\footnote{Here and below, $\mathcal H^1$ denotes one-dimensional Hausdorff
measure (arc length on circles), and $\mathcal L^2$ denotes
two-dimensional Lebesgue measure}
 \qquad\forall\,s>r,
\]
where $\tau(x)=(-x_2,x_1)/|x|$ is the positively oriented unit tangent to $\partial B_s(0)$.
\end{lemma}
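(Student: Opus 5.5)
The strategy is to show first that the circulation $c(s):=\int_{\partial B_s(0)}(\operatorname{Tr}_s w)\cdot\tau\,\dd\mathcal H^1$ is independent of $s>r$. Once that is known, the $L^2$ integrability of $w$ forces the common value to be zero.

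\textbf{Step 1 (constancy).} Fix $r<s_1<s_2$ and work on the annulus $\Acal:=\{s_1<|x|<s_2\}$. The map $w\mapsto c(s)$ is continuous from $W^{1,2}$ of a neighborhood of $\partial B_s(0)$, because the trace operator into $L^1(\partial B_s(0))$ is continuous. So we may mollify: on a slightly smaller open annulus containing $\overline{\Acal}$, the mollified field $w_\varepsilon$ is smooth, satisfies $\curl w_\varepsilon=0$ classically (mollification commutes with $\curl$), and converges to $w$ in $W^{1,2}$ near $\overline{\Acal}$. For smooth curl-free fields, Green's theorem on $\Acal$ gives
\[
\int_{\partial B_{s_2}(0)}w_\varepsilon\cdot\tau\,\dd\mathcal H^1-\int_{\partial B_{s_1}(0)}w_\varepsilon\cdot\tau\,\dd\mathcal H^1=\int_{\Acal}\curl w_\varepsilon\,\dd x=0.
\]
Passing to the limit $\varepsilon\to0$ via trace continuity gives $c(s_1)=c(s_2)=:c$.

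\textbf{Step 2 (vanishing).} By Cauchy--Schwarz on each circle, $|c|\le \|\operatorname{Tr}_s w\|_{L^2(\partial B_s(0))}\sqrt{2\pi s}$, so
\[
\frac{c^2}{2\pi s}\le\int_{\partial B_s(0)}|\operatorname{Tr}_s w|^2\,\dd\mathcal H^1 .
\]
For a.e. $s$ the trace coincides with the restriction of a representative; this holds for instance with the $W^{1,2}_{\mathrm{loc}}$ representative that is absolutely continuous on a.e. circle, or by Fubini applied to the mollified approximations. Integrating over $s\in(2r,R)$ and using the coarea formula in polar coordinates gives
\[
\frac{c^2}{2\pi}\log\frac{R}{2r}\le\int_V|w|^2\,\dd x<\infty .
\]
Letting $R\to\infty$ forces $c=0$.

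\textbf{Main obstacle.} The only delicate point is justifying that the trace integral equals the pointwise circle integral for a.e. $s$, so that the coarea integration in Step 2 is legitimate. I would handle this by proving the Step 2 inequality for the mollifiers $w_\varepsilon$ on compact annuli $\{2r<|x|<R\}$ and then passing to the limit. Both sides converge: the left side by trace continuity on each circle together with dominated convergence in $s$, and the right side by $L^2$ convergence of $w_\varepsilon$ to $w$ on the annulus.
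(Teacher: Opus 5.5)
Your proposal is correct and follows essentially the same route as the paper. Both arguments show the circulation is constant via smooth approximation, Green's theorem and trace continuity on an annulus, and then use Cauchy--Schwarz on circles plus the coarea formula to get a $\log R$ lower bound on $\|w\|_{L^2}^2$, which forces $c=0$. The differences are minor: you mollify so that $\curl w_\varepsilon=0$ holds classically, whereas the paper takes arbitrary smooth $W^{1,2}$ approximants and passes $\int\curl w_j\to\int\curl w=0$; and you justify the a.e.\ identification of the trace with the circle restriction through the mollified approximants, whereas the paper cites Sobolev slicing.
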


\begin{proof}
For $s>r$, set
\[
 c(s):=\int_{\partial B_s(0)}
 (\operatorname{Tr}_s w)\cdot\tau\,\dd\mathcal H^1.
\]
Fix $r<s_1<s_2$ and set
$ \mathcal A_{12}:=B_{s_2}(0)\setminus\overline{B_{s_1}(0)}
$. Since $\overline{\mathcal A_{12}}\subset V$, we have 
$w\in W^{1,2}(\mathcal A_{12};\mathbb{R}^2)$.  The annulus
$\mathcal A_{12}$ is a bounded Lipschitz domain.  Choose a sequence $w_j\in C^\infty(\overline{\mathcal A_{12}};\mathbb{R}^2)$ such that
\[
 \begin{gathered}
  w_j\to w
  \quad\text{strongly in }
  W^{1,2}(\mathcal A_{12};\mathbb{R}^2).
 \end{gathered}
\]
By the continuity of the trace operator, we have 
\[
 \operatorname{Tr}w_j\to\operatorname{Tr}w
 \quad\text{strongly in }
 L^2(\partial\mathcal A_{12};\mathbb{R}^2).
\]
Applying the classical Stokes formula to $w_j$ and then passing to the
limit, we obtain 
\[
 \begin{aligned}
 c(s_2)-c(s_1)
 &=\lim_{j\to\infty}
 \left(
 \int_{\partial B_{s_2}(0)}w_j\cdot\tau\,\dd\mathcal H^1
 -\int_{\partial B_{s_1}(0)}w_j\cdot\tau\,\dd\mathcal H^1
 \right)\\
 &=\lim_{j\to\infty}
 \int_{\mathcal A_{12}}\curl w_j\,\dd x
 =\int_{\mathcal A_{12}}\curl w\,\dd x
 =0.
 \end{aligned}
\]
The last equality holds because $w\in W^{1,2}(\mathcal A_{12};\mathbb{R}^2)$ and $\curl w=0$ in the sense of distributions. Together, these imply that $\curl w = 0$ almost everywhere in $\mathcal A_{12}$. 
 Thus $c(s)=c$ for all $s>r$.

By Sobolev slicing in polar coordinates, for almost every $s>r$ the
trace $\operatorname{Tr}_s w$ coincides $\mathcal H^1$-almost everywhere
with the polar $L^2$-slice of $w$.  For such $s$, by Cauchy--Schwarz on
$\partial B_s(0)$, we infer 
\[
 \begin{aligned}
 |c|^2
 &=\left|
 \int_{\partial B_s(0)}w\cdot\tau\,\dd\mathcal H^1
 \right|^2\leq\mathcal H^1(\partial B_s(0))
 \int_{\partial B_s(0)}|w|^2\,\dd\mathcal H^1\\
 &=2\pi s
 \int_{\partial B_s(0)}|w|^2\,\dd\mathcal H^1.
 \end{aligned}
\]
Integrating this almost-everywhere inequality in $s$ and using the
coarea formula for $x\mapsto|x|$, for $R>s_0>r$, we get 
\[
 \int_{B_R(0)\setminus\overline{B_{s_0}(0)}}|w|^2\dd x
 \geq\frac{|c|^2}{2\pi}\int_{s_0}^R\frac{\dd s}{s}
 =\frac{|c|^2}{2\pi}\log\frac{R}{s_0}.
\]
Since $w\in L^2(V;\mathbb{R}^2)$, letting $R\to\infty$ we have $c=0$.
\end{proof}

The next proposition constructs a global variational comparison field for $k$ in the exterior domain.

\begin{proposition}
\label{prop:global-variational-comparison}
Let $0<\delta<\frac14$, let
$\widetilde W$, $\widetilde A$, and $\lambda$ be defined in
Lemma~\ref{lem:convex-extension}, let $r>0$, and set
$V:=\{x\in\mathbb{R}^2:|x|>r\}
$. Suppose that $k\in C^\infty(\mathcal U;\mathbb{R}^2)$ for an open
neighborhood $\mathcal U$ of $\overline V$, that
$F\in L^2(V;\mathbb{R}^2)$, and that
\begin{equation}\label{eq:global-comparison-assumptions}
 \curl k=0,
 \qquad
 \Div\bigl(\widetilde A(k)-F\bigr)=0
 \quad\text{in }\mathcal D'(V).
\end{equation}
Then there exists
$ \ell\in W^{1,2}_{\mathrm{loc}}(V;\mathbb{R}^2)
       \cap C^0(V;\mathbb{R}^2)$
such that
\begin{equation}\label{eq:global-comparator-equations}
 \curl\ell=0,
 \qquad
 \Div\widetilde A(\ell)=0
 \quad\text{in }\mathcal D'(V),
\end{equation}
and
\begin{equation}\label{eq:global-comparison-L2}
 \|k-\ell\|_{L^2(V;\mathbb{R}^2)}
 \leq\lambda^{-1}\|F\|_{L^2(V;\mathbb{R}^2)}.
\end{equation}
Moreover, $k$ and $\ell$ have the same circulation:
\begin{equation}\label{eq:global-comparator-circulation}
 \int_{\partial B_s(0)}\ell\cdot\tau\,\dd\mathcal H^1
 =\int_{\partial B_s(0)}k\cdot\tau\,\dd\mathcal H^1
 \qquad\forall\,s>r,
\end{equation}
where $\tau(x)=(-x_2,x_1)/|x|$ is the positively oriented unit tangent to $\partial B_s(0)$. Additionally, if
\begin{equation}\label{eq:global-comparison-k-zero}
 \lim_{R\to\infty}
 \sup_{\substack{x\in V\\|x|>R}}|k(x)|=0,
\end{equation}
then
\begin{equation}\label{eq:global-comparator-zero}
 \lim_{R\to\infty}
 \sup_{\substack{x\in V\\|x|>R}}|\ell(x)|=0.
\end{equation}
\end{proposition}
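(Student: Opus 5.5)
We follow the variational scheme sketched in the introduction. Let $\mathcal G(V)$ be the $L^2(V;\mathbb R^2)$-closure of $\{\nabla\phi:\phi\in C_c^\infty(V)\}$. For $w\in\mathcal G(V)$ we set
\[
 \mathcal R_k(w):=\widetilde W(k-w)-\widetilde W(k)+\widetilde A(k)\cdot w,
 \qquad
 \mathcal J(w):=\int_V\bigl(\mathcal R_k(w)-F\cdot w\bigr)\dd x .
\]
By Taylor's formula and \eqref{eq:convex-extension-hessian-statement}, we have $\frac\lambda2|w|^2\le\mathcal R_k(w)\le\frac12|w|^2$ pointwise. Hence $\mathcal J$ is finite on $L^2$ even though $k$ itself need not be in $L^2$, and it is coercive: $\mathcal J(w)\ge\frac\lambda2\|w\|^2-\|F\|\|w\|$. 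The remainder $\mathcal R_k$ is strictly convex in $w$. It is also $C^1$ on $L^2$, with derivative $\int(\widetilde A(k)-\widetilde A(k-w))\cdot h$, by the Lipschitz bound \eqref{eq:extension-lipschitz-monotone}. The direct method on the closed subspace $\mathcal G(V)$ therefore yields a unique minimizer $w$ satisfying
\[
 \int_V\bigl(\widetilde A(k)-\widetilde A(k-w)-F\bigr)\cdot h\dd x=0
 \qquad\text{for all } h\in\mathcal G(V).
\]
Define $\ell:=k-w$. Taking $h=\nabla\phi$ and using $\Div(\widetilde A(k)-F)=0$ gives $\Div\widetilde A(\ell)=0$ in $\mathcal D'(V)$. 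Every element of $\mathcal G(V)$ is curl-free in $\mathcal D'(V)$, since curl-freeness is preserved under $L^2$ limits. With $\curl k=0$ this gives $\curl\ell=0$. For the $L^2$ bound, take $h=w$ and apply monotonicity:
\[
 \lambda\|w\|^2\le\int_V\bigl(\widetilde A(k)-\widetilde A(k-w)\bigr)\cdot w=\int_V F\cdot w\le\|F\|\|w\|.
\]
This is exactly \eqref{eq:global-comparison-L2}.

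Next comes regularity: $\ell\in W^{1,2}_{\mathrm{loc}}\cap C^0$. Locally, on a ball $B\Subset V$, $\curl\ell=0$ gives $\ell=\nabla\psi$ with $\psi\in W^{1,2}(B)$, and $\Div\widetilde A(\nabla\psi)=0$ is a uniformly monotone, Lipschitz equation by Lemma~\ref{lem:convex-extension}. The standard difference-quotient argument (Nirenberg) gives $\psi\in W^{2,2}_{\mathrm{loc}}$, i.e.\ $\ell\in W^{1,2}_{\mathrm{loc}}$. It also shows that each component $\ell_m=\partial_m\psi$ weakly solves $\Div(D\widetilde A(\ell)\nabla\ell_m)=0$, with the symmetric measurable coefficient matrix $D\widetilde A(\ell)$ lying between $\lambda I$ and $I$. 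Lemma~\ref{lem:interior-elliptic-estimates} (De Giorgi--Nash--Moser) then gives H\"older continuity of $\ell$, in particular $\ell\in C^0(V)$. In particular $w=k-\ell\in W^{1,2}_{\mathrm{loc}}\cap L^2$ is curl-free, so Lemma~\ref{lem:weak-circulation} gives zero circulation of $w$ on every circle. Since $w$ is continuous, the trace is the restriction, and \eqref{eq:global-comparator-circulation} follows.

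The remaining step, the uniform decay \eqref{eq:global-comparator-zero}, is the most delicate, because we only control $\ell$ through $k$ and the $L^2$ function $w$; no equation is available for $w$ alone. We plan to apply the $L^\infty$ estimate \eqref{eq:interior-DGN-estimate} with fixed radius $r'=1$ on balls $B_2(x_0)$, $|x_0|$ large, to the component equation $\Div(D\widetilde A(\ell)\nabla\ell_m)=0$. This gives
\[
 |\ell(x_0)|\le C\|\ell\|_{L^2(B_2(x_0))}\le C\bigl(\|k\|_{L^2(B_2(x_0))}+\|w\|_{L^2(B_2(x_0))}\bigr).
\]
The first term is at most $C\sup_{B_2(x_0)}|k|\to0$ by \eqref{eq:global-comparison-k-zero}. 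The second is bounded by the $L^2$ tail $\|w\|_{L^2(\{|x|>|x_0|-2\})}\to0$. The constant $C$ depends only on $\lambda$ and $1$, so the estimate is uniform in $x_0$. One must still check that applying Lemma~\ref{lem:interior-elliptic-estimates} is legitimate: $\ell_m\in W^{1,2}(B_2(x_0))$ from the previous step, and the coefficients are admissible because only their ellipticity bounds matter, not their regularity.
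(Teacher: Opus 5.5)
Your proposal is correct and follows essentially the same route as the paper. The common steps are: minimize the renormalized Taylor-remainder functional over $\mathcal G(V)$; test the Euler--Lagrange equation with $h=w$ to get the $L^2$ bound; use difference quotients plus De Giorgi--Nash--Moser for $W^{1,2}_{\mathrm{loc}}\cap C^0$ regularity; apply Lemma~\ref{lem:weak-circulation} for the circulation; and combine the unit-scale $L^\infty$ estimate with the vanishing $L^2$ tail of $w$ for the decay. The only difference is cosmetic: you write $\mathcal R_k$ in the pointwise form $\widetilde W(k-w)-\widetilde W(k)+\widetilde A(k)\cdot w$ rather than the integral Taylor form. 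This is harmless because you only ever integrate the combination.
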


\begin{proof}

Let $\mathcal G(V)$ denote the closure, with respect to the
$L^2(V;\mathbb R^2)$ norm, of the subspace
$ \{\nabla\varphi:\varphi\in C_c^\infty(V)\}$. 
For $w\in L^2(V;\mathbb{R}^2)$, define the Taylor-remainder density
\begin{equation*}\label{eq:taylor-remainder-density}
 \mathcal R_k(w)
 :=\int_0^1(1-t)
 D^2\widetilde W(k-tw) w\cdot w\dd t.
\end{equation*}
The Taylor-remainder form is used because the phase field $k$ may not belong
to $L^2(V;\mathbb{R}^2)$.  Although the pointwise identity
\[
 \mathcal R_k(w)
 =\widetilde W(k-w)-\widetilde W(k)
  +\widetilde A(k)\cdot w
\]
holds, its three terms need not be integrable separately. Using the Hessian
bounds we have 
\begin{equation}\label{eq:remainder-quadratic-bounds}
 \frac{\lambda}{2}|w|^2
 \leq\mathcal R_k(w)
 \leq\frac12|w|^2
 \quad\text{a.e. in }V.
\end{equation}

Define
\begin{equation}\label{eq:global-variational-functional}
 \mathcal J(w)
 :=\int_V\bigl(\mathcal R_k(w)-F\cdot w\bigr)\dd x,
 \qquad \forall\,w\in\mathcal G(V).
\end{equation}
It follows from \eqref{eq:remainder-quadratic-bounds} that
\begin{equation}\label{eq:remainder-coercivity}
 \mathcal J(w)
 \geq\frac{\lambda}{2}\|w\|_{L^2(V;\mathbb{R}^2)}^2
      -\|F\|_{L^2(V;\mathbb{R}^2)}
       \|w\|_{L^2(V;\mathbb{R}^2)}.
\end{equation}
For almost every $x$, the Hessian of
$w\mapsto\mathcal R_k(w)(x)$ is
$D^2\widetilde W(k-w)$.  Hence the integrand is
$\lambda$-uniformly convex.  Moreover,
\[
 D_w\mathcal R_k(w)=\widetilde A(k)-\widetilde A(k-w),
 \qquad
 |D_w\mathcal R_k(w)|\leq|w|.
\]
Integration along the segment from $v$ to $w$, we obtain 
\begin{equation*}\label{eq:remainder-continuity}
 |\mathcal R_k(w)-\mathcal R_k(v)|
 \leq(|w|+|v|)|w-v|.
\end{equation*}
Note that
\[
 \left|\int_V\mathcal R_k(w)\dd x
       -\int_V\mathcal R_k(v)\dd x\right|
 \leq \bigl(
 \|w\|_{L^2(V;\mathbb{R}^2)}+
 \|v\|_{L^2(V;\mathbb{R}^2)}\bigr)
 \|w-v\|_{L^2(V;\mathbb{R}^2)}.
\]
Thus the integral of $\mathcal R_k$ is norm-continuous and convex on
$L^2(V;\mathbb{R}^2)$. Therefore
it is  weakly lower semicontinuous.  Since
$\mathcal G(V)$ is weakly closed, \eqref{eq:remainder-coercivity} and
the direct method give a unique minimizer
$w\in\mathcal G(V)$.

The Euler equation for the minimizer is obtained as follows.  If
$h\in\mathcal G(V)$ and
$0<|t|\leq1$, then
\[
 \left|
 \frac{\mathcal R_k(w+th)-\mathcal R_k(w)}{t}
 \right|
 \leq (|w|+|h|)|h|.
\]
The right-hand side, together with $|F||h|$, belongs to $L^1(V)$. Hence by
dominated convergence, we get
\begin{equation}\label{eq:variational-Euler}
 \int_V
 \bigl[\widetilde A(k)-\widetilde A(k-w)-F\bigr]\cdot h\dd x=0
 \qquad\forall\,h\in\mathcal G(V).
\end{equation}
Taking $h=w$ and using
\eqref{eq:extension-lipschitz-monotone}, we obtain
\[
 \lambda\|w\|_{L^2(V;\mathbb{R}^2)}^2
 \leq\int_V F\cdot w\dd x
 \leq\|F\|_{L^2(V;\mathbb{R}^2)}
       \|w\|_{L^2(V;\mathbb{R}^2)}.
\]
Consequently,
\begin{equation}\label{eq:variational-correction-bound}
 \|w\|_{L^2(V;\mathbb{R}^2)}
 \leq\lambda^{-1}\|F\|_{L^2(V;\mathbb{R}^2)}.
\end{equation}

Set $\ell:=k-w$. Every element of $\mathcal G(V)$ is an
$L^2(V;\mathbb{R}^2)$-limit of gradients,
and hence $\curl w=0$ in $\mathcal D'(V)$.  Thus
$\curl\ell=0$.  For $\varphi\in C_c^\infty(V)$, take
$h=\nabla\varphi$ in \eqref{eq:variational-Euler}.  Subtracting the weak
equation in \eqref{eq:global-comparison-assumptions} we obtain 
\[
 \int_V\widetilde A(\ell)\cdot\nabla\varphi\dd x=0.
\]
This proves \eqref{eq:global-comparator-equations}, and
\eqref{eq:global-comparison-L2} follows from
\eqref{eq:variational-correction-bound}.

We establish the local estimate used below.  Fix $x_0\in V$ and
$\rho>0$ such that $ \overline{B_{4\rho}(x_0)}\subset V$. 
Since $\ell\in L^2_{\mathrm{loc}}(V;\mathbb{R}^2)$ and
$\curl\ell=0$, from the weak Poincar\'e lemma we have 
$\Phi\in W^{1,2}(B_{4\rho}(x_0))$ such that
$\nabla\Phi=\ell$ almost everywhere.  Moreover,
\[
 \Div\widetilde A(\nabla\Phi)=0
 \quad\text{in }\mathcal D'(B_{4\rho}(x_0)).
\]
For $m\in\{1,2\}$ and $0<|\varepsilon|<\rho$, define
\[
 v_\varepsilon(x)
 :=\frac{\Phi(x+\varepsilon e_m)-\Phi(x)}{\varepsilon},
 \qquad \text{for a.e. }x\in B_{3\rho}(x_0).
\]
To justify the translated equation, fix
$\psi\in C_c^\infty(B_{3\rho}(x_0))$.  The functions $\psi$ and
$z\mapsto\psi(z-\varepsilon e_m)$ belong to
$C_c^\infty(B_{4\rho}(x_0))$.  Testing the equation for $\Phi$ with
these two functions, changing variables $z=x+\varepsilon e_m$ in the
second test, subtracting, and dividing by $\varepsilon$, we infer 
\[
 \begin{aligned}
 0
 &=\int_{B_{3\rho}(x_0)}
 \frac{\widetilde A(\nabla\Phi(x+\varepsilon e_m))
       -\widetilde A(\nabla\Phi(x))}{\varepsilon}
 \cdot\nabla\psi(x)\dd x\\
 &=\int_{B_{3\rho}(x_0)}
 B_\varepsilon\nabla v_\varepsilon\cdot\nabla\psi\dd x.
 \end{aligned}
\]
Consequently,
\[
 \Div(B_\varepsilon\nabla v_\varepsilon)=0
 \quad\text{in }\mathcal D'(B_{3\rho}(x_0)),
\]
where
\[
 B_\varepsilon(x)
 :=\int_0^1D\widetilde A\bigl(
 (1-t)\nabla\Phi(x)+t\nabla\Phi(x+\varepsilon e_m)\bigr)\dd t,
 \qquad \text{for a.e. }x\in B_{3\rho}(x_0).
\]
The Hessian bounds in Lemma~\ref{lem:convex-extension} imply
\[
 \lambda|\xi|^2
 \leq B_\varepsilon(x)\xi\cdot\xi
 \leq|\xi|^2
 \quad\text{for a.e. }x\in B_{3\rho}(x_0)
 \quad\forall\,\xi\in\mathbb{R}^2.
\]
Choose $\zeta\in C_c^\infty(B_{3\rho}(x_0))$ such that
\[
 0\leq\zeta\leq1,\qquad
 \zeta=1\ \text{on }B_{2\rho}(x_0),\qquad
 |\nabla\zeta|\leq\frac{C}{\rho}.
\]
Testing the equation for $v_\varepsilon$ with
$\zeta^2v_\varepsilon$,  we get the Caccioppoli-type inequality
\[
 \int_{B_{2\rho}(x_0)}|\nabla v_\varepsilon|^2\dd x
 \leq\frac{C}{\rho^2}
 \int_{B_{3\rho}(x_0)}|v_\varepsilon|^2\dd x.
\]
It is known that $\Phi$ satisfies
the standard difference-quotient identity
\cite[Section~5.8.2]{Evans}:
\[
 v_\varepsilon(x)
 =\int_0^1\partial_m\Phi(x+t\varepsilon e_m)\dd t
 \quad\text{for a.e. }x\in B_{3\rho}(x_0).
\]
Using Jensen's inequality and translation of the integration domain, we obtain 
\[
 \begin{aligned}
 \|v_\varepsilon\|_{L^2(B_{3\rho}(x_0))}^2
 &\leq\int_0^1\int_{B_{3\rho}(x_0)}
 |\partial_m\Phi(x+t\varepsilon e_m)|^2\dd x\dd t\\
 &\leq\|\ell_m\|_{L^2(B_{4\rho}(x_0))}^2
 \leq\|\ell\|_{L^2(B_{4\rho}(x_0);\mathbb{R}^2)}^2.
 \end{aligned}
\]
Combining this estimate with the preceding Caccioppoli inequality, we have 
\begin{equation}\label{eq:difference-quotient-uniform-bound}
 \int_{B_{2\rho}(x_0)}|\nabla v_\varepsilon|^2\dd x
 \leq\frac{C}{\rho^2}
 \int_{B_{4\rho}(x_0)}|\ell|^2\dd x.
\end{equation}

Let $\varepsilon_n\to0$ with $0<|\varepsilon_n|<\rho$. By 
the difference-quotient theorem, we infer 
\[
 v_{\varepsilon_n}\to
 \partial_m\Phi=\ell_m
 \quad\text{strongly in }L^2(B_{3\rho}(x_0)).
\]
After passing to a subsequence, by 
\eqref{eq:difference-quotient-uniform-bound} we have 
\[
 \nabla v_{\varepsilon_n}\rightharpoonup g_m
 \quad\text{weakly in }
 L^2(B_{2\rho}(x_0);\mathbb{R}^2)
\]
for some $g_m$.  If
$\psi\in C_c^\infty(B_{2\rho}(x_0))$ and $j\in\{1,2\}$, then
\[
 \begin{aligned}
 \int_{B_{2\rho}(x_0)}\ell_m\partial_j\psi\dd x
 &=\lim_{n\to\infty}
  \int_{B_{2\rho}(x_0)}v_{\varepsilon_n}
  \partial_j\psi\dd x\\
 &=-\lim_{n\to\infty}
  \int_{B_{2\rho}(x_0)}
  \partial_jv_{\varepsilon_n}\,\psi\dd x =-\int_{B_{2\rho}(x_0)}(g_m)_j\psi\dd x.
 \end{aligned}
\]
Thus $g_m=\nabla\ell_m$ and
$\ell_m\in W^{1,2}(B_{2\rho}(x_0))$. From weak lower semicontinuity we have 
\[
 \int_{B_{2\rho}(x_0)}|\nabla\ell_m|^2\dd x
 \leq\frac{C}{\rho^2}
 \int_{B_{4\rho}(x_0)}|\ell|^2\dd x.
\]
Summing over $m=1,2$ and absorbing the factor two into $C$, we get 
\begin{equation*}\label{eq:global-comparator-Caccioppoli}
 \int_{B_{2\rho}(x_0)}|D\ell|^2\dd x
 \leq\frac{C}{\rho^2}
 \int_{B_{4\rho}(x_0)}|\ell|^2\dd x.
\end{equation*}
Consequently, $\ell\in W^{1,2}_{\mathrm{loc}}(V;\mathbb{R}^2)$.

Since $ D\widetilde A=D^2\widetilde W \in L^\infty(\mathbb{R}^2;\mathbb{R}^{2\times2})$
and $\ell\in W^{1,2}_{\mathrm{loc}}(V;\mathbb{R}^2)$, by the Sobolev
chain rule we obtain 
\[
 \widetilde A(\ell)\in
 W^{1,2}_{\mathrm{loc}}(V;\mathbb{R}^2),\qquad
 \partial_m[\widetilde A(\ell)]
 =D\widetilde A(\ell)\partial_m\ell
 \quad\text{in }L^2_{\mathrm{loc}}(V;\mathbb{R}^2)
\]
for $m\in\{1,2\}$.  Componentwise, we have 
\[
 \partial_m[\widetilde A_i(\ell)]
 =\sum_{q=1}^2
 \partial_{\ell_q}\widetilde A_i(\ell)\,\partial_m\ell_q
 \quad\text{almost everywhere in }V,
 \qquad \forall\,i,m\in\{1,2\}.
\]
Applying $\partial_m$ to
$\Div\widetilde A(\ell)=0$ in $\mathcal D'(V)$, we obtain 
\[
 \Div\bigl(D\widetilde A(\ell)\partial_m\ell\bigr)=0
 \quad\text{in }\mathcal D'(V).
\]
Moreover, by $\curl\ell=0$ and
$\ell\in W^{1,2}_{\mathrm{loc}}(V;\mathbb{R}^2)$, we get 
\[
 \partial_m\ell_j=\partial_j\ell_m
 \quad\text{almost everywhere in }V,
 \qquad \forall\,j,m\in\{1,2\}.
\]
Hence $\partial_m\ell=\nabla\ell_m$ almost everywhere, and
\begin{equation}\label{eq:global-comparator-component-equation}
 \Div\bigl(D\widetilde A(\ell)\nabla\ell_m\bigr)=0
 \quad\text{in }\mathcal D'(V),
 \qquad \forall\,m\in\{1,2\}.
\end{equation}
For each $m\in\{1,2\}$, the preceding argument implies 
$\ell_m\in W^{1,2}(B_{2\rho}(x_0))$.  Set
$ G:=D\widetilde A(\ell)$.
This coefficient is measurable and symmetric.  More explicitly,
$ G\in L^\infty(B_{2\rho}(x_0);\mathbb{R}^{2\times2})$
and
\[
 \lambda|\xi|^2\leq G(x)\xi\cdot\xi\leq|\xi|^2
 \quad\text{for a.e. }x\in B_{2\rho}(x_0)
 \quad\forall\,\xi\in\mathbb{R}^2.
\]
Equation \eqref{eq:global-comparator-component-equation} is precisely
the weak scalar equation required in
Lemma~\ref{lem:interior-elliptic-estimates}.
Applying \eqref{eq:interior-DGN-estimate} of Lemma \ref{lem:interior-elliptic-estimates}, we can find some
$\alpha=\alpha(\lambda)\in(0,1)$ such that,
\begin{equation}\label{eq:global-comparator-local-bound}
 \|\ell\|_{L^\infty(B_\rho(x_0);\mathbb{R}^2)}
 +\rho^\alpha
 [\ell]_{C^{0,\alpha}(B_\rho(x_0);\mathbb{R}^2)}
 \leq\frac{C}{\rho}
 \|\ell\|_{L^2(B_{2\rho}(x_0);\mathbb{R}^2)}.
\end{equation}
The constant $C$ in \eqref{eq:global-comparator-local-bound} depends
only on $\lambda$.

Thus $ w=k-\ell\in W^{1,2}_{\mathrm{loc}}(V;\mathbb{R}^2)
   \cap L^2(V;\mathbb{R}^2)$
satisfies $\curl w=0$ in $\mathcal D'(V)$.
Since $k$ and $\ell$ are continuous, the Sobolev trace of $w$ on each
$\partial B_s(0)$ coincides with its pointwise restriction. Therefore, using
Lemma~\ref{lem:weak-circulation} we know that the circulation
of $w$ is zero on every such circle. Thus, $k$ and $\ell$ have the same
circulation, which proves
\eqref{eq:global-comparator-circulation}.

Finally, assume \eqref{eq:global-comparison-k-zero}.  Let $R>r+2$, and let
$x_0\in\mathbb{R}^2$ satisfy $|x_0|>R+2$.  Then
$\overline{B_4(x_0)}\subset V$ and
$B_2(x_0)\subset\{y\in\mathbb{R}^2:|y|>R\}$.  Applying
\eqref{eq:global-comparator-local-bound} with $\rho=1$, we obtain 
\[
 \sup_{\substack{x\in V\\|x|>R+2}}|\ell(x)|
 \leq C\Big(
 \mathcal L^2(B_2(0))^{1/2}
 \sup_{\substack{y\in V\\|y|>R}}|k(y)|
 +\|w\|_{L^2(\{y\in\mathbb{R}^2:|y|>R\};\mathbb{R}^2)}
 \Big).
\]
Therefore, the first term tends to zero by
\eqref{eq:global-comparison-k-zero}, and the second tends to zero
because $w\in L^2(V;\mathbb{R}^2)$.  This proves
\eqref{eq:global-comparator-zero}.
\end{proof}

\begin{lemma}
\label{lem:planar-puncture-removal}
Let $R>0$ and $N\in\mathbb N^{+}$.  Suppose that
\[
 v=(v_1,\ldots,v_N)\in W^{1,2}_{\mathrm{loc}}
 \bigl(B_R(0)\setminus\{0\};\mathbb{R}^N\bigr)
 \cap L^{\infty}(B_{R/2}(0);\mathbb{R}^N)
\]
and
$Q\in L^\infty(B_R(0);\mathbb{R}^{2\times2})$ is symmetric.  Assume
that there are constants $0<\lambda\leq\Lambda<\infty$ such that
\[
 \lambda|\xi|^2
 \leq Q(y)\xi\cdot\xi
 \leq\Lambda|\xi|^2
 \quad\text{for a.e. }y\in B_R(0)
 \quad\forall\,\xi\in\mathbb{R}^2.
\]
If
\[
 \Div(Q\nabla v_m)=0
 \quad\text{in }\mathcal D'(B_R(0)\setminus\{0\}),
 \qquad \forall\,m\in\{1,\ldots,N\},
\]
then $ v\in W^{1,2}_{\mathrm{loc}}(B_R(0);\mathbb{R}^N)$ and
\[
 \Div(Q\nabla v_m)=0
 \quad\text{in }\mathcal D'(B_R(0)),
 \qquad \forall\,m\in\{1,\ldots,N\}.
\]
\end{lemma}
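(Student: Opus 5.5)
The plan is a logarithmic-cutoff argument. Points have zero $W^{1,2}$-capacity in the plane, so the puncture can be removed using only boundedness and ellipticity. Everything is componentwise, so fix $m$ and write $q:=v_m$.

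\textbf{Step 1: a local energy bound near the origin.} For $0<\varepsilon<\varepsilon_0<R/4$, I would introduce the logarithmic cutoff
\[
 \eta_\varepsilon(y):=
 \begin{cases}
 0, & |y|\leq\varepsilon^2,\\[2pt]
 \dfrac{\log(|y|/\varepsilon^2)}{\log(1/\varepsilon)}, & \varepsilon^2<|y|<\varepsilon,\\[4pt]
 1, & |y|\geq\varepsilon,
 \end{cases}
\]
which is Lipschitz and satisfies $\int|\nabla\eta_\varepsilon|^2\dd y=2\pi/\log(1/\varepsilon)\to0$. Fix $\chi\in C_c^\infty(B_{R/2}(0))$ with $\chi=1$ on $B_{R/4}(0)$.

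The function $(\eta_\varepsilon\chi)^2q$ lies in $W^{1,2}_0$ and has support away from the origin. It is therefore an admissible test function, after approximating it by smooth functions supported in the punctured disk. Testing with it gives the Caccioppoli-type inequality
\[
 \int(\eta_\varepsilon\chi)^2|\nabla q|^2
 \leq C\,\|q\|_{L^\infty(B_{R/2})}^2
 \int\bigl(|\nabla\eta_\varepsilon|^2+|\nabla\chi|^2\bigr),
\]
where $C=C(\lambda,\Lambda)$. Letting $\varepsilon\to0$ and using monotone convergence, we get $\nabla q\in L^2(B_{R/4}(0))$. Since $q$ is also bounded, this gives $q\in W^{1,2}(B_{R/4}(0))$.

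\textbf{Step 2: the derivative has no part concentrated at the origin.} Next I would check that the distributional gradient of $q$ on the whole ball equals the pointwise a.e. gradient. For $\psi\in C_c^\infty(B_{R/4})$ I would write $\int q\,\partial_j\psi=\lim_{\varepsilon\to0}\int q\,\partial_j(\eta_\varepsilon\psi)$. The error term is
\[
 \int q\,\psi\,\partial_j\eta_\varepsilon,
\]
which is bounded by $\|q\psi\|_\infty\|\nabla\eta_\varepsilon\|_{L^1}\leq C\varepsilon$ and so tends to zero. Integrating by parts away from the origin then yields $\int q\,\partial_j\psi=-\int\partial_jq\,\psi$. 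Away from the origin, $q\in W^{1,2}_{\mathrm{loc}}$ holds already by hypothesis, so $q\in W^{1,2}_{\mathrm{loc}}(B_R(0))$.

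\textbf{Step 3: the equation holds across the origin.} For $\psi\in C_c^\infty(B_R(0))$, the function $\eta_\varepsilon\psi$ is admissible in the punctured equation, so
\[
 0=\int Q\nabla q\cdot\nabla(\eta_\varepsilon\psi)
 =\int\eta_\varepsilon\,Q\nabla q\cdot\nabla\psi
 +\int\psi\,Q\nabla q\cdot\nabla\eta_\varepsilon .
\]
The first term converges to $\int Q\nabla q\cdot\nabla\psi$ by dominated convergence. By Cauchy--Schwarz, the second is at most
\[
 \Lambda\|\psi\|_\infty\,\|\nabla q\|_{L^2(B_\varepsilon)}\,\|\nabla\eta_\varepsilon\|_{L^2}\to0 .
\]
Hence $\Div(Q\nabla q)=0$ in $\mathcal D'(B_R(0))$.

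\textbf{Main obstacle.} The delicate step is Step 1. There we must verify that $\eta_\varepsilon^2\chi^2q$ is a legitimate test function for an equation that is only known on the punctured disk. We must also absorb the cross term $2\int\eta_\varepsilon\chi\,q\,Q\nabla q\cdot\nabla(\eta_\varepsilon\chi)$ using Young's inequality, and this absorption requires $\int(\eta_\varepsilon\chi)^2|\nabla q|^2$ to be finite before letting $\varepsilon\to0$. That finiteness holds because $\eta_\varepsilon\chi$ is supported in a compact subset of the punctured disk, where $q\in W^{1,2}$.

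The argument uses that in two dimensions the logarithmic cutoff has vanishing Dirichlet energy. It also needs the $L^\infty$ bound near the origin, which in the application comes from $v(0)=0$ and continuity.
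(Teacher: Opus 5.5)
Your proposal is correct and follows essentially the same route as the paper. Both arguments use a logarithmic cutoff between radii $\varepsilon^2$ and $\varepsilon$, a Caccioppoli inequality whose cutoff term vanishes thanks to the $L^\infty$ bound, the $L^1$-smallness of the cutoff gradient to show the distributional gradient has no part at the origin, and its $L^2$-smallness to pass the equation across the puncture. The differences are only cosmetic: you use a Lipschitz rather than smooth cutoff, monotone convergence rather than Fatou, and test directly on $B_R(0)$ rather than via a partition of unity.
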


\begin{proof}
Fix $\rho>0$ such that $4\rho<R$, and choose
$\eta\in C_c^\infty(B_{2\rho}(0))$ that satisfies
\[
 0\leq\eta\leq1,\qquad
 \eta=1\quad\text{on }B_\rho(0),\qquad
 |\nabla\eta|\leq\frac{C}{\rho}.
\]
For $0<\varepsilon<\min\{\rho,\frac12\}$, choose a radial cutoff
$\chi_\varepsilon\in C^\infty(\mathbb R^2)$ such that
\[
 0\leq\chi_\varepsilon\leq1,\qquad
 \chi_\varepsilon=0
 \quad\text{on }\overline{B_{\varepsilon^2}(0)},\qquad
 \chi_\varepsilon=1
 \quad\text{on }\mathbb R^2\setminus B_\varepsilon(0),
\]
and
\[
 \supp(\nabla\chi_\varepsilon)
 \subset
 B_\varepsilon(0)\setminus\overline{B_{\varepsilon^2}(0)},
 \qquad
 |\nabla\chi_\varepsilon(y)|
 \leq\frac{C}{|y|\,|\log\varepsilon|}
 \quad\text{for }
 \varepsilon^2<|y|<\varepsilon.
\]
Consequently
\begin{equation}\label{eq:planar-log-cutoff}
 \int_{B_\varepsilon(0)}|\nabla\chi_\varepsilon|^2\dd y
 \leq\frac{C}{|\log\varepsilon|},
 \qquad
 \int_{B_\varepsilon(0)}|\nabla\chi_\varepsilon|\dd y
 \leq\frac{C\varepsilon}{|\log\varepsilon|}.
\end{equation}
For example, one may choose
$\theta\in C^\infty(\mathbb R;[0,1])$ such that
\[
 \theta=0\quad\text{on }(-\infty,1/4],
 \qquad
 \theta=1\quad\text{on }[3/4,+\infty),
\]
and define
\[
 \chi_\varepsilon(0)=0,
 \qquad
 \chi_\varepsilon(y)
 =
 \theta\left(
 \frac{\log |y|-2\log\varepsilon}{|\log\varepsilon|}
 \right)
 \quad\text{for }y\neq0.
\]

For each $m$, the function
$\eta^2\chi_\varepsilon^2v_m$ belongs to
$
 W^{1,2}_0\bigl(
 B_{2\rho}(0)\setminus\overline{B_{\varepsilon^2}(0)}
 \bigr)$. 
 Testing the
component equations, summing over $m$, and using ellipticity and
Young's inequality, we have 
\begin{equation}\label{eq:puncture-Caccioppoli}
 \int_{B_{2\rho}(0)}
 \eta^2\chi_\varepsilon^2|\nabla v|^2\dd y
 \leq C\int_{B_{2\rho}(0)}|v|^2
 \bigl(
 \chi_\varepsilon^2|\nabla\eta|^2
 +\eta^2|\nabla\chi_\varepsilon|^2
 \bigr)\dd y.
\end{equation}
The term containing $\nabla\eta$ is bounded independently of
$\varepsilon$.  Since $v\in L^{\infty}(B_{R/2}(0);\mathbb{R}^N)$, we get 
\[
 \int_{B_{2\rho}(0)}|v|^2\eta^2
 |\nabla\chi_\varepsilon|^2\dd y
 \leq\frac{C}{|\log\varepsilon|} \|v\|_{L^{\infty}(B_\varepsilon(0);\mathbb{R}^N)}^2 \to0\qquad \mbox{as}~\varepsilon\to 0^{+}.
\]
For every
$y\in B_{2\rho}(0)\setminus\{0\}$,
$\chi_\varepsilon(y)\to1$. Using Fatou's lemma to
\eqref{eq:puncture-Caccioppoli}, we obtain 
\begin{equation}\label{eq:puncture-gradient-bound}
 \int_{B_\rho(0)}|\nabla v|^2\dd y<\infty.
\end{equation}

Let $g_{jm}$ denote the weak derivative $\partial_jv_m$ on the
punctured disk, arbitrarily extended at the origin.  By
\eqref{eq:puncture-gradient-bound}, $g_{jm}\in L^2(B_\rho(0))$.
For $\varphi\in C_c^\infty(B_\rho(0))$, integration by parts with
$\chi_\varepsilon\varphi$, justified by density, we get 
\[
 \int_{B_\rho(0)}
 \chi_\varepsilon v_m\partial_j\varphi\dd y
 +\int_{B_\rho(0)}
 v_m\varphi\partial_j\chi_\varepsilon\dd y
 =-\int_{B_\rho(0)}
 \chi_\varepsilon g_{jm}\varphi\dd y.
\]
The middle term satisfies
\[
 \left|
 \int_{B_\rho(0)}v_m\varphi
 \partial_j\chi_\varepsilon\dd y
 \right|
 \leq\|\varphi\|_{L^\infty(B_\rho(0))}
 \|v\|_{L^{\infty}(B_\varepsilon(0);\mathbb{R}^N)}
 \|\nabla\chi_\varepsilon\|_
 {L^1(B_\varepsilon(0);\mathbb{R}^2)}
 \to0.
\]
Assign an arbitrary value to $v$ at the origin, and denote the resulting extension again by $v$. Since a single point has measure zero and $v\in L^\infty(B_\rho(0))$, we have $v\in L^2(B_\rho(0))$.  Dominated convergence
in terms containing $\chi_\varepsilon$, together with the preceding
estimate for the term containing $\nabla\chi_\varepsilon$, we obtain 
\[
\int_{B_\rho(0)} v_m\,\partial_j\varphi\dd y
=
-\int_{B_\rho(0)} g_{jm}\,\varphi\dd y
\qquad
\text{for all }\varphi\in C_c^\infty(B_\rho(0)).
\]
Thus, $\partial_j v_m=g_{jm}$ in $\mathcal D'(B_\rho(0))$. Since
$v_m,g_{jm}\in L^2(B_\rho(0))$, it follows that $v\in W^{1,2}(B_\rho(0);\mathbb R^N)$.

Finally, let $\psi\in C_c^\infty(B_\rho(0))$.  Testing the equation on
the punctured disk with $\chi_\varepsilon\psi$, we have 
\[
 0=
 \int_{B_\rho(0)}
 \chi_\varepsilon Q\nabla v_m\cdot\nabla\psi\dd y
 +\int_{B_\rho(0)}
 \psi Q\nabla v_m\cdot\nabla\chi_\varepsilon\dd y.
\]
The second term tends to zero because
\[
 \begin{aligned}
 &\left|
 \int_{B_\rho(0)}
 \psi Q\nabla v_m\cdot\nabla\chi_\varepsilon\dd y
 \right|\\
 &\quad\leq C\|\psi\|_{L^\infty(B_\rho(0))}
 \|\nabla v_m\|_{L^2(B_\varepsilon(0);\mathbb{R}^2)}
 \|\nabla\chi_\varepsilon\|_
 {L^2(B_\varepsilon(0);\mathbb{R}^2)}
 \to0.
 \end{aligned}
\]
Here, \eqref{eq:puncture-gradient-bound} provides a uniform bound for
$\|\nabla v_m\|_{L^2(B_\varepsilon(0);\mathbb{R}^2)}$, while
\eqref{eq:planar-log-cutoff} gives
$\|\nabla\chi_\varepsilon\|_{L^2(B_\varepsilon(0);\mathbb{R}^2)}
\leq C|\log\varepsilon|^{-1/2}\to0$.
Moreover,
\[
 \begin{aligned}
 &\left|
 \int_{B_\rho(0)}(1-\chi_\varepsilon)
 Q\nabla v_m\cdot\nabla\psi\dd y
 \right|\\
 &\quad\leq
 C\|\nabla\psi\|_{L^\infty(B_\rho(0);\mathbb{R}^2)}
 \mathcal L^2(B_\varepsilon(0))^{1/2}
 \|\nabla v_m\|_{L^2(B_\varepsilon(0);\mathbb{R}^2)}
 \to0.
 \end{aligned}
\]
It follows that the weak equation holds in $B_\rho(0)$.  It already
holds in $B_R(0)\setminus\{0\}$.  Given
$\psi\in C_c^\infty(B_R(0))$, a smooth partition of unity subordinate
to
\[
 B_R(0)=B_\rho(0)\cup\bigl(B_R(0)\setminus\{0\}\bigr)
\]
decomposes $\psi$ into test functions supported in these two sets.
This proves both assertions on $B_R(0)$.
\end{proof}

The next lemma gives the decay estimate for the homogeneous field.

\begin{lemma}
\label{lem:critical-decay}
Let $R>0$, set $E_R:=\{x\in\mathbb{R}^2:|x|>R\}$, and suppose that
$\ell\in W^{1,2}_{\mathrm{loc}}(E_R;\mathbb{R}^2)
       \cap C^0(E_R;\mathbb{R}^2)$
satisfies
\begin{equation*}\label{eq:critical-decay-system}
 \curl\ell=0,
 \qquad
 \Div\bigl((1-|\ell|^2)\ell\bigr)=0
 \quad\text{in }\mathcal D'(E_R),
\end{equation*}
and
\begin{equation}\label{eq:critical-decay-zero}
 \lim_{S\to\infty}
 \sup_{\substack{x\in E_R\\|x|>S}}|\ell(x)|=0.
\end{equation}
Then there exist constants $C>0$ and $R_1>R$ such that
\begin{equation}\label{eq:critical-decay-rate}
 |\ell(x)|\leq \frac{C}{|x|}
 \qquad
 \forall\,x\in\mathbb{R}^2\text{ with }|x|>R_1.
\end{equation}
In particular, $\ell\in L^\gamma(E_{R_1};\mathbb{R}^2)$ for all $\gamma>2$.
\end{lemma}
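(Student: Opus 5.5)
The plan is to follow the Kelvin-inversion strategy sketched in the introduction: first derive a scalar divergence-form equation for each component of $\ell$, then invert, remove the puncture with Lemma~\ref{lem:planar-puncture-removal}, and bootstrap De Giorgi--Nash--Moser to Schauder regularity at the origin.

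\emph{Step 1: reduction to the region $|\ell|$ small.} Fix $\delta\in(0,\tfrac14)$ and use \eqref{eq:critical-decay-zero} to choose $R_0>R$ with $|\ell|\le\delta/2$ on $E_{R_0}$. There $A(\ell)=\widetilde A(\ell)$, so $\ell$ satisfies $\curl\ell=0$ and $\Div\widetilde A(\ell)=0$ in $\mathcal D'(E_{R_0})$. The difference-quotient and chain-rule argument from the proof of Proposition~\ref{prop:global-variational-comparison} is purely local. Repeating it gives $\Div(G\nabla\ell_m)=0$ for $m=1,2$, with $G:=D\widetilde A(\ell)=DA(\ell)$, which is symmetric and satisfies $\lambda I\le G\le I$. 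Since $\ell$ is continuous, $G$ is continuous.

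\emph{Step 2: Kelvin inversion.} Set $y=x/|x|^2$ and $v(y):=\ell(y/|y|^2)$ on the punctured disk $B_{1/R_0}(0)\setminus\{0\}$. A divergence-form equation $\Div_x(G\nabla_x q)=0$ transforms into $\Div_y(Q\nabla_y \tilde q)=0$. The transformed matrix is
\[
 Q(y)=|\det DT|\,DT^{-1}\,G\,DT^{-T}, \qquad DT(y)=|y|^{-2}(I-2\hat y\otimes\hat y), \quad \hat y:=y/|y|.
\]
The factors of $|y|$ cancel in dimension two, so $Q(y)=P(y)\,G(y/|y|^2)\,P(y)$ with $P=I-2\hat y\otimes\hat y$ an orthogonal reflection. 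Hence $Q$ is symmetric with the same ellipticity bounds $\lambda$ and $1$, and bounded. Each $v_m\in W^{1,2}_{\mathrm{loc}}$ away from $0$, and $v$ is bounded and tends to $0$ at the origin. Lemma~\ref{lem:planar-puncture-removal} then gives $v\in W^{1,2}_{\mathrm{loc}}$ and $\Div(Q\nabla v_m)=0$ on the whole disk. Setting $v(0)=0$, the estimate \eqref{eq:interior-DGN-estimate} applied on balls centered at the origin, together with an oscillation decay, yields $|v(y)|\le C|y|^\beta$ for some $\beta\in(0,1)$. The cleanest route is iterating the Hölder estimate: $|v(y)|=|v(y)-v(0)|\le C|y|^\alpha$, with $\beta=\alpha$.

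\emph{Step 3: Hölder coefficients and Schauder.} This is the main obstacle, since $P(y)$ is discontinuous at $0$. The key observation is $DA(p)=I-|p|^2I-2p\otimes p$, which gives
\[
 Q(y)=P^2-P\bigl(|v|^2I+2v\otimes v\bigr)P=I-M(y), \qquad |M(y)|\le 3|v(y)|^2\le C|y|^{2\beta}.
\]
So $Q$ extends with $Q(0)=I$, and one must check $Q\in C^{0,\beta'}$ near $0$. Away from the origin, $P$ is smooth with $|\nabla P|\lesssim|y|^{-1}$, and $v$ is $C^{0,\alpha}$ locally. For two points $y,z$, split into the cases $|y-z|\ge|y|/2$ and $|y-z|<|y|/2$. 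In the first case, use $|M(y)|+|M(z)|\le C|y-z|^{2\beta}$. In the second, use $|P(y)-P(z)|\lesssim|y-z|/|y|$ combined with the factor $|v|^2\lesssim|y|^{2\beta}$, plus the Hölder continuity of $v$. Interpolating gives a Hölder exponent $\min(\beta,\alpha)$-type bound. Then \eqref{eq:interior-Schauder-estimate} yields $v\in C^{1,\beta'}$ near $0$, and with $v(0)=0$ this gives $|v(y)|\le C|y|$.

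\emph{Step 4: conclusion.} Translating back, $|\ell(x)|=|v(x/|x|^2)|\le C/|x|$ for $|x|>R_1$. Then $\int_{E_{R_1}}|\ell|^\gamma\le C\int_{R_1}^\infty r^{1-\gamma}\,\dd r<\infty$ for $\gamma>2$, which proves \eqref{eq:critical-decay-rate} and the $L^\gamma$ claim.
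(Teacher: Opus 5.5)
Your proposal is correct and follows essentially the same route as the paper: the componentwise equation $\Div(DA(\ell)\nabla\ell_m)=0$, Kelvin inversion with the reflected coefficient $\mathcal R(y)M(\mathcal K(y))\mathcal R(y)$, puncture removal, De Giorgi--Nash--Moser giving $|v(y)|\le C|y|^\beta$, the same two-case H\"older check of the coefficient via $DA(p)-I=-|p|^2I-2p\otimes p$, and Schauder. The only difference is cosmetic: since $\ell\in W^{1,2}_{\mathrm{loc}}$ is already a hypothesis and $|\ell|\le\frac14$ already makes $DA(\ell)$ uniformly elliptic (with constant $\frac{13}{16}$), the paper applies the chain rule directly to $A$ and needs neither the difference-quotient argument nor the extension $\widetilde A$.
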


\begin{proof}
By \eqref{eq:critical-decay-zero}, choose $R_0>R$ such that
\[
 \sup_{\substack{x\in E_R\\|x|>R_0}}|\ell(x)|\leq\frac{1}{4}.\]
Set
\[
 M(x):=DA(\ell(x))
 =(1-|\ell(x)|^2)I-2\ell(x)\otimes\ell(x),
 \qquad \forall\,x\in E_{R_0}.
\]
The eigenvalues of $M$ are $1-|\ell|^2$ and
$1-3|\ell|^2$.  Consequently,
\begin{equation}\label{eq:critical-ellipticity}
 \frac{13}{16}|\xi|^2
 \leq M(x)\xi\cdot\xi
 \leq |\xi|^2
 \qquad
 \forall\,x\in E_{R_0},\ \forall\,\xi\in\mathbb{R}^2.
\end{equation}
Using the Sobolev chain rule we get 
$A(\ell)\in W^{1,2}_{\mathrm{loc}}(E_{R_0};\mathbb{R}^2)$ and
\[
 \partial_m[A(\ell)]=DA(\ell)\partial_m\ell
 \quad\text{in }L^2_{\mathrm{loc}}(E_{R_0};\mathbb{R}^2).
\]
Applying $\partial_m$ to the divergence equation and using
$\partial_m\ell_j=\partial_j\ell_m$ almost everywhere, we obtain
\begin{equation}\label{eq:critical-component-equation}
 \Div(M\nabla\ell_m)=0
 \quad\text{in }\mathcal D'(E_{R_0}),
 \qquad \forall\,m\in\{1,2\}.
\end{equation}

Apply the Kelvin transform to
\eqref{eq:critical-component-equation}.  For $y\neq0$, we define
\[
 \mathcal K(y):=\frac{y}{|y|^2},
 \qquad
 \mathcal R(y):=I-2\frac{y}{|y|}\otimes\frac{y}{|y|}.
\]
For $0<|y|<R_0^{-1}$, set
\[
 v(y):=\ell(\mathcal K(y)),
 \qquad
 \widehat M(y):=\mathcal R(y)M(\mathcal K(y))\mathcal R(y).
\]
On every compact subset of
$B_{1/R_0}(0)\setminus\{0\}$, the maps $\mathcal K$ and
$\mathcal K^{-1}$ are smooth with bounded derivatives.  Thus by the Sobolev
composition theorem, we infer 
\[
 v\in W^{1,2}_{\mathrm{loc}}
 \bigl(B_{1/R_0}(0)\setminus\{0\};\mathbb{R}^2\bigr).
\]
Note that we have the fllowing identities
\[ D\mathcal K(y)=|y|^{-2}\mathcal R(y),\quad
 \mathcal R(y)^2=I,\quad
 |\det D\mathcal K(y)|=|y|^{-4},
 \quad\forall\,y\in\mathbb{R}^2\setminus\{0\}.\]
 Fix $m\in\{1,2\}$, let
$\psi\in C_c^\infty(B_{1/R_0}(0)\setminus\{0\})$, and define
$\varphi:=\psi\circ\mathcal K$.  Then
$ \supp\varphi=\mathcal K(\supp\psi)\Subset E_{R_0}$, 
so $\varphi\in C_c^\infty(E_{R_0})$.  Since $\mathcal K$ is an
involution and $\mathcal R(\mathcal K(y))=\mathcal R(y)$, from the Sobolev
chain rule, for almost every
$y\in B_{1/R_0}(0)\setminus\{0\}$, we have 
\[
 \nabla_x\ell_m(\mathcal K(y))
 =|y|^2\mathcal R(y)\nabla_yv_m(y),
 \qquad
 \nabla_x\varphi(\mathcal K(y))
 =|y|^2\mathcal R(y)\nabla_y\psi(y).
\]
Changing variables $x=\mathcal K(y)$ and substituting the two chain
rules we infer 
\[
 \begin{aligned}
 0
 &=\int_{E_{R_0}}M\nabla\ell_m\cdot\nabla\varphi\dd x\\
 &=\int_{B_{1/R_0}(0)\setminus\{0\}}
 M(\mathcal K(y))
 \bigl(|y|^2\mathcal R(y)\nabla v_m(y)\bigr)
 \cdot
 \bigl(|y|^2\mathcal R(y)\nabla\psi(y)\bigr)
 |y|^{-4}\dd y\\
 &=\int_{B_{1/R_0}(0)\setminus\{0\}}
 \mathcal R(y)M(\mathcal K(y))\mathcal R(y)
 \nabla v_m(y)\cdot\nabla\psi(y)\dd y\\
 &=\int_{B_{1/R_0}(0)\setminus\{0\}}
 \widehat M(y)\nabla v_m(y)\cdot\nabla\psi(y)\dd y.
 \end{aligned}
\]
Since this identity holds for every
$\psi\in C_c^\infty(B_{1/R_0}(0)\setminus\{0\})$,
\begin{equation*}\label{eq:critical-inverted-equation}
 \Div(\widehat M\nabla v_m)=0
 \quad\text{in }\mathcal D'(B_{1/R_0}(0)\setminus\{0\}),
 \qquad \forall\,m\in\{1,2\}.
\end{equation*}
The matrix $\mathcal R(y)$ is orthogonal, so $\widehat M$ has the
ellipticity bounds in \eqref{eq:critical-ellipticity}.  The limit in
\eqref{eq:critical-decay-zero} shows that the definition $v(0):=0$
makes $v$ continuous on $B_{1/R_0}(0)$.  Set $\widehat M(0):=I$. Applying
Lemma~\ref{lem:planar-puncture-removal}  with
$N=2$, $R=R_0^{-1}$, and $Q=\widehat M$, we obtain 
\begin{equation}\label{eq:critical-full-disk-equation}
 \begin{gathered}
  v\in W^{1,2}_{\mathrm{loc}}(B_{1/R_0}(0);\mathbb{R}^2),\\
  \Div(\widehat M\nabla v_m)=0
  \quad\text{in }\mathcal D'(B_{1/R_0}(0)),
  \quad\forall\,m\in\{1,2\}.
 \end{gathered}
\end{equation}

Set
\[
 r_*:=\frac18\min\{R_0^{-1},1\}.
\]
Then $\overline{B_{4r_*}(0)}\subset B_{1/R_0}(0)$.  For each
$m\in\{1,2\}$, by \eqref{eq:critical-full-disk-equation} we have 
$v_m\in W^{1,2}(B_{4r_*}(0))$.  The associated coefficient matrix $G=\widehat M$ is measurable, symmetric, and belongs to
$L^\infty(B_{4r_*}(0);\mathbb{R}^{2\times2})$. Applying Lemma \ref{lem:interior-elliptic-estimates} with $r=2r_*$ to each component, we obtain $\beta\in(0,1)$ and a finite constant $C$, depending only on the
ellipticity constants in \eqref{eq:critical-ellipticity}, such that
\[
 |v_m(y)-v_m(z)|
 \leq Cr_*^{-1-\beta}
 \|v_m\|_{L^2(B_{4r_*}(0))}|y-z|^\beta
\]
for $y,z\in B_{2r_*}(0)$ and $m\in\{1,2\}$.  Since $v(0)=0$, after
enlarging the constant, we have
\begin{equation}\label{eq:critical-initial-holder}
 |v(y)-v(z)|\leq C|y-z|^\beta,
 \qquad
 |v(y)|\leq C|y|^\beta,
 \qquad\forall\,y,z\in B_{2r_*}(0).
\end{equation}

We now verify that
$\widehat M\in C^{0,\beta}
(B_{r_*}(0);\mathbb{R}^{2\times2})$, which is required to derive the Schauder estimate.  Since 
\[
 DA(a)-I=-|a|^2I-2a\otimes a
 \quad\forall\,a\in\mathbb{R}^2
\]
we have 
\[
 |DA(a)-I|\leq C|a|^2,
 \qquad
 |DA(a)-DA(b)|\leq C(|a|+|b|)|a-b|
 \quad\forall\,a,b\in\mathbb{R}^2.
\]
For $y\neq0$,
\begin{equation}\label{eq:critical-inverted-structure}
 \widehat M(y)-I
 =\mathcal R(y)\bigl(DA(v(y))-I\bigr)\mathcal R(y).
\end{equation}
In particular, $\widehat M(y)\to I=\widehat M(0)$ as $y\to0$.

Let $y,z\in B_{r_*}(0)$, and set
$d:=|y-z|$ and $r:=\max\{|y|,|z|\}$.  If $d\geq r/2$, then by 
\eqref{eq:critical-initial-holder} and
\eqref{eq:critical-inverted-structure} we have 
\[
 |\widehat M(y)-\widehat M(z)|
 \leq Cr^{2\beta}\leq Cd^{2\beta}\leq Cd^\beta.
\]
Here $d\leq2r_*\leq1/4$, so the last inequality follows from
$0<\beta<1$.
Suppose that $d<r/2$.  Then $y,z\neq0$ and
\[
 \frac r2<\min\{|y|,|z|\}
 \leq\max\{|y|,|z|\}=r,
 \qquad
 |\mathcal R(y)-\mathcal R(z)|\leq C\frac d r.
\]
The exact decomposition
\[
\begin{aligned}
 \widehat M(y)-\widehat M(z)
 ={}&\mathcal R(y)\bigl(DA(v(y))-DA(v(z))\bigr)\mathcal R(y)\\
 &+\bigl(\mathcal R(y)-\mathcal R(z)\bigr)
 \bigl(DA(v(z))-I\bigr)\mathcal R(y)\\
 &+\mathcal R(z)\bigl(DA(v(z))-I\bigr)
 \bigl(\mathcal R(y)-\mathcal R(z)\bigr)
\end{aligned}
\]
and \eqref{eq:critical-initial-holder} imply that
\[
 |\widehat M(y)-\widehat M(z)|
 \leq Cr^\beta d^\beta+Cd r^{2\beta-1}
 \leq Cd^\beta,
\]
because
$d r^{2\beta-1}
 =d^\beta(d/r)^{1-\beta}r^\beta\leq d^\beta$ and
$r^\beta\leq1$.
This proves
\begin{equation}\label{eq:critical-coefficient-holder}
 \widehat M\in
 C^{0,\beta}(B_{r_*}(0);\mathbb{R}^{2\times2}).
\end{equation}

For each $m\in\{1,2\}$,
$v_m\in W^{1,2}(B_{r_*}(0))$ is a weak solution.  The symmetry and
ellipticity bounds remain valid, while
\eqref{eq:critical-coefficient-holder} satisfies the additional
hypothesis of the coefficient $C^{0,\beta}$.  Apply
\eqref{eq:interior-Schauder-estimate} with $x_0=0$ and $r=r_*/2$.  Then $ v_m\in C^{1,\beta}(B_{r_*/2}(0))
$
and
\[
 \frac{r_*}{2}
 \|\nabla v_m\|_{L^\infty(B_{r_*/2}(0);\mathbb{R}^2)}
 \leq C\|v_m\|_{L^\infty(B_{r_*}(0))}.
\]
Consequently, $\sup_{B_{\frac{r_*}{2}}(0)}|\nabla v|<\infty$. Since $v(0)=0$, integration along the segment from $0$ to $y$ we get 
\[ 
 |v(y)|\leq C|y|
 \qquad\forall\,y\in B_{r_*/2}(0).
\]
Returning through the inversion, we conclude that
\[
 |\ell(x)|=|v(\mathcal K(x))|
 \leq C|x|^{-1}
 \qquad
 \forall\,x\in\mathbb{R}^2\text{ with }
 |x|>\frac{2}{r_*}.
\]
Taking $R_1:=2/r_*$ we prove \eqref{eq:critical-decay-rate}.  The
integrability assertion follows from 
\[
 \int_{\{x\in\mathbb{R}^2:|x|>R_1\}}|x|^{-\gamma}\dd x
 =2\pi\int_{R_1}^\infty r^{1-\gamma}\dd r<\infty
 \qquad\forall\,\gamma>2.
\]
\end{proof}

\begin{proof}[Proof of Theorem~\ref{thm:exterior-phase}]
Fix $0<\delta<\frac14$, and let
$\widetilde W$, $\widetilde A$, and $\lambda$ be given by
Lemma~\ref{lem:convex-extension}.  By
\eqref{eq:k-uniform-zero}, there exists $r_\delta>r_0$ such that
\[
 |k|\leq\delta
 \quad\text{on }
 V:=\{x\in\mathbb{R}^2:|x|>r_\delta\}.
\]
Thus $\widetilde A(k)=A(k)$ on $V$, and
\eqref{eq:phase-lemma-assumptions} becomes
\[
 \Div\bigl(\widetilde A(k)-F\bigr)=0
 \quad\text{in }\mathcal D'(V).
\]
Proposition~\ref{prop:global-variational-comparison} provides a field $\ell$ such
that
\begin{equation}\label{eq:theorem-global-comparison}
 \begin{gathered}
  \curl\ell=0,\qquad
  \Div\widetilde A(\ell)=0,\qquad
  k-\ell\in L^2(V;\mathbb{R}^2),\\
  \displaystyle
  \lim_{R\to\infty}
  \sup_{\substack{x\in V\\|x|>R}}|\ell(x)|=0.
 \end{gathered}
\end{equation}

Choose $R_2>r_\delta$ such that
\[
 |\ell(x)|\leq\delta
 \qquad
 \forall\,x\in\mathbb{R}^2\text{ with }|x|>R_2.
\]
Then $\widetilde A(\ell)=A(\ell)$ on
$E_{R_2}:=\{x\in\mathbb{R}^2:|x|>R_2\}$.  Lemma~\ref{lem:critical-decay}
gives constants $C>0$ and $R_1>R_2$ such that
\[
 \begin{gathered}
  |\ell(x)|\leq C|x|^{-1}
  \quad\forall\,x\in\mathbb{R}^2\text{ with }|x|>R_1,\\
  \ell\in L^\gamma(E_{R_1};\mathbb{R}^2)
  \quad\forall\,\gamma>2.
 \end{gathered}
\]

Fix $\gamma>2$ and set $w:=k-\ell$.  By
\eqref{eq:theorem-global-comparison},
$w\in L^2(V;\mathbb{R}^2)$.  Moreover, by
\eqref{eq:k-uniform-zero} and \eqref{eq:theorem-global-comparison}, we obtain 
\[
 \begin{aligned}
 \sup_{\substack{x\in V\\|x|>R}}|w(x)|
 &\leq
 \sup_{\substack{x\in V\\|x|>R}}|k(x)|
 +\sup_{\substack{x\in V\\|x|>R}}|\ell(x)|
 \to0
 \end{aligned}
 \qquad\text{as }R\to\infty.
\]
Choose $R_3\geq R_1$ such that
\[
 |w(x)|\leq1
 \quad\forall\,x\in\mathbb{R}^2\text{ with }|x|>R_3.
\]
Set
\[
 E_{R_3}:=\{x\in\mathbb{R}^2:|x|>R_3\}.
\]
Then
\[
 \int_{E_{R_3}}|w|^\gamma\dd x
 \leq\int_{E_{R_3}}|w|^2\dd x<\infty.
\]
Therefore $k=\ell+w$ belongs to
$L^\gamma(E_{R_3};\mathbb{R}^2)$.  The set
\[
 E\setminus E_{R_3}
 =\{x\in\mathbb{R}^2:r_0<|x|\leq R_3\}
\]
has compact closure in $\mathcal U$.  Since $k$ is smooth on
$\mathcal U$, it is bounded on $E\setminus E_{R_3}$.
Therefore $k\in L^\gamma(E;\mathbb{R}^2)$.  Because $\gamma>2$ was
arbitrary, we obtain \eqref{eq:phase-prop-conclusion}.
\end{proof}

\begin{remark}\label{rem:phase-sharp}
The condition $\gamma>2$ cannot be relaxed.  Let
$C_0\in\mathbb{R}\setminus\{0\}$ and set
\[
 T(x):=\frac{(-x_2,x_1)}{|x|^2},
 \qquad
 k:=C_0T,
 \qquad
 F:=0.
\]
Then $\curl k=0$, $|k(x)|=|C_0|/|x|$, and
\[
 \int_{\partial B_s(0)}k\cdot\tau\,\dd\mathcal H^1
 =2\pi C_0
 \qquad\forall\,s>0,
\]
while
\[
 \Div\bigl((1-|k|^2)k\bigr)=0
 \quad\text{on }
 \mathbb{R}^2\setminus\overline{B_r(0)}
 \quad\forall\,r>0.
\]
Moreover,
\[
 \int_{\{x\in\mathbb{R}^2:|x|>r_0\}}|k(x)|^\gamma\dd x
 =2\pi|C_0|^\gamma\int_{r_0}^\infty r^{1-\gamma}\dd r,
\]
which is finite if and only if $\gamma>2$.  Thus the conclusion fails
in general for $\gamma\leq2$, and the $|x|^{-1}$ decay rate in
Lemma~\ref{lem:critical-decay} cannot be improved without additional
hypotheses.
\end{remark}

\section{The Ginzburg--Landau equation}
\label{sec:ginzburg-landau}

\subsection{Pointwise estimates and the Bernstein bound}
\label{subsec:pointwise-bernstein}

\begin{lemma}\label{lem:basic-bounds}
Let $u\in C^\infty(\mathbb{R}^2;\mathbb{R}^2)$ satisfy
\begin{equation*}\label{eq:basic-bounds-assumptions}
 -\Delta u=u(1-|u|^2)\quad\text{in }\mathbb{R}^2,
 \qquad
 \lim_{|x|\to\infty}|u(x)|=1.
\end{equation*}
Then
\begin{equation*}\label{eq:bernstein}
 |\nabla u(x)|^2\leq1-|u(x)|^2
 \qquad\forall\,x\in\mathbb{R}^2.
\end{equation*}
\end{lemma}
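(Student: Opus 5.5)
The plan is a maximum-principle (``$P$-function'') argument applied to
\[
 \psi:=|\nabla u|^2+|u|^2-1 ,
\]
whose nonpositivity is exactly the claim. Three preliminary facts come first: $|u|\leq1$, global $C^2$ bounds, and $\nabla u\to0$ at infinity. Then we show that $\psi$ is subharmonic wherever it is positive, and conclude from the fact that $\psi\to0$ at infinity.

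\emph{Step 1: $|u|\leq1$.} A direct computation gives
\[
 \Delta|u|^2=2|\nabla u|^2-2|u|^2(1-|u|^2).
\]
Suppose $\sup|u|^2>1$. Since $|u|\to1$ at infinity, this supremum is attained at some point. There the right-hand side is strictly positive, which contradicts $\Delta|u|^2\leq0$ at an interior maximum. Hence $|u|\leq1$. Since $u$ is then bounded, interior elliptic estimates on unit balls (bootstrapping the equation) give uniform global bounds on $\nabla u$, $D^2u$ and $D^3u$.

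\emph{Step 2: $|\nabla u(x)|\to0$ as $|x|\to\infty$.} Take any sequence $|x_j|\to\infty$ and set $u_j:=u(\cdot+x_j)$. By the uniform bounds and Arzel\`a--Ascoli, a subsequence converges in $C^2_{\mathrm{loc}}$ to a solution $u_\infty$ of the Ginzburg--Landau equation with $|u_\infty|\equiv1$. Then $u_\infty$ is harmonic with unit modulus, so
\[
 0=\tfrac12\Delta|u_\infty|^2=|\nabla u_\infty|^2+u_\infty\cdot\Delta u_\infty=|\nabla u_\infty|^2 .
\]
Thus $u_\infty$ is constant and $\nabla u(x_j)\to0$ along the subsequence. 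Since the sequence was arbitrary, $\nabla u\to0$ at infinity, and consequently $\psi\to0$ at infinity.

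\emph{Step 3: the differential inequality.} Bochner's formula together with the equation gives
\[
 \Delta|\nabla u|^2
 =2|D^2u|^2-2(1-|u|^2)|\nabla u|^2+4\sum_j(u\cdot\partial_ju)^2 .
\]
Adding the formula for $\Delta|u|^2$ from Step 1 yields
\[
 \Delta\psi=2|D^2u|^2+4\sum_j(u\cdot\partial_ju)^2+2|u|^2\bigl(|\nabla u|^2-(1-|u|^2)\bigr)
 \geq 2|u|^2\psi .
\]
In particular $\Delta\psi\geq0$ on the open set $\{\psi>0\}$. This identity is the step I would check most carefully: everything depends on the cross terms combining into exactly $2|u|^2\psi$ plus nonnegative terms.

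\emph{Step 4: conclusion.} Suppose $\psi>0$ somewhere, and let $\Omega$ be a connected component of $\{\psi>0\}$. Since $\psi\to0$ at infinity, $\Omega\neq\mathbb R^2$ and $\sup_\Omega\psi>0$ is attained at an interior point of $\Omega$. The function $\psi$ is subharmonic on $\Omega$, so the strong maximum principle forces it to be constant, equal to this positive value, on $\Omega$. By continuity $\psi$ would then take this positive value on $\partial\Omega\neq\emptyset$. This contradicts $\psi=0$ on $\partial\Omega$. Hence $\psi\leq0$ everywhere, i.e.\ $|\nabla u|^2\leq1-|u|^2$, as in \cite[Theorem~3.5]{Smyrnelis}.

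The main obstacle is Step 2, the vanishing of $\nabla u$ at infinity, since the hypothesis \eqref{eq:modulus-limit} controls only the modulus. It is handled by the compactness-and-rigidity argument above; the pointwise computation in Step 3 is routine but must be verified exactly.
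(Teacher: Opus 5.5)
Your proof is correct. I checked the identity you flagged in Step 3. With $h=1-|u|^2$, adding the two Laplacian formulas gives
\[
 \Delta\psi=2|D^2u|^2+4\sum_j(u\cdot\partial_ju)^2+2|u|^2\psi ,
\]
because $2(1-h)|\nabla u|^2-2h|u|^2=2|u|^2(|\nabla u|^2-h)$ and $\psi=|\nabla u|^2-h$.

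The paper gives no argument for this lemma; it only cites \cite[Theorem~3.5]{Smyrnelis}. So the comparison is between a citation and your self-contained $P$-function proof. Your version removes the dependence on the external reference at the one point where the Ginzburg--Landau structure is used. It also shows exactly where the hypothesis $|u|\to1$ enters: attainment of the maximum of $|u|^2$ in Step 1, and $\psi\to0$ at infinity through the compactness and rigidity argument of Step 2. Nothing in it is two-dimensional, so it works unchanged in $\mathbb{R}^N$.

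There is one small ordering slip in Step 4. The decay $\psi\to0$ does not by itself give $\Omega\neq\mathbb{R}^2$, since a positive function can tend to $0$. However, once the strong maximum principle gives $\psi\equiv M>0$ on $\Omega$, the decay forces $\Omega$ to be bounded. Hence $\partial\Omega\neq\emptyset$, and your contradiction goes through unchanged.
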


\begin{proof}
See \cite[Theorem~3.5]{Smyrnelis}.
\end{proof}

\subsection{The exterior phase}
\label{subsec:exterior-phase-gl}
For the remainder of this section, let $u\in C^\infty(\mathbb{R}^2;\mathbb{R}^2)$ satisfy
\eqref{eq:GL} and \eqref{eq:modulus-limit}.

Let
\begin{equation*}\label{eq:J-def}
 J\colon\mathbb{R}^2\to\mathbb{R}^2,
 \qquad
 J(y_1,y_2):=(-y_2,y_1).
\end{equation*}
Thus $J^2=-I$, $J^\top=-J$, and $|Jy|=|y|$.  Under the
identification $\mathbb{R}^2\simeq\mathbb{C}$, the map $J$ is multiplication by $i$.
Accordingly, if $n$ is $\mathbb{R}^2$-valued, the notation
\begin{equation*}\label{eq:n-perp-def}
 n^\perp:=Jn
\end{equation*}
means the real vector obtained by rotating $n$ through $\pi/2$.

By \eqref{eq:modulus-limit}, there exists $R_0>2$ such that
\begin{equation*}\label{eq:rho-lower-neighborhood}
 |u(x)|\geq\frac12
 \quad\forall\,x\in\mathbb{R}^2\text{ with }|x|>R_0-1.
\end{equation*}
Set
\begin{equation}\label{eq:exterior-domains}
 \mathcal U_0:=\{x\in\mathbb{R}^2:|x|>R_0-1\},
 \qquad
 \Omega:=\{x\in\mathbb{R}^2:|x|>R_0\}.
\end{equation}
Then $\mathcal U_0$ is an open neighborhood of $\overline\Omega$.  For
$0<r<R$, set
\begin{equation*}\label{eq:annulus-phase-def}
 \Acal(r,R):=\{x\in\mathbb{R}^2:r<|x|<R\}.
\end{equation*}
Define
\begin{equation}\label{eq:rho-n-def}
 \rho:=|u|,
 \qquad
 n:=\frac{u}{\rho}.
\end{equation}
Since $u$ is smooth and does not vanish on $\mathcal U_0$, one has
\[
 \rho\in C^\infty(\mathcal U_0),\qquad
 n\in C^\infty(\mathcal U_0;\mathbb{R}^2),\qquad
 n(\mathcal U_0)\subset S^1.
\]
In particular,
\begin{equation}\label{eq:rho-lower}
 \rho\geq\frac12
 \qquad\text{on }\mathcal U_0.
\end{equation}

\begin{lemma}
\label{lem:exterior-phase-decomposition}
For $a\in\{1,2\}$, define
\begin{equation}\label{eq:k-global}
 k_a:=n^\perp\cdot\partial_a n=(Jn)\cdot\partial_a n
 \qquad\text{on }\mathcal U_0,
\end{equation}
and set
$k:=(k_1,k_2)\colon\mathcal U_0\to\mathbb{R}^2$.
Then the following assertions hold.

\begin{enumerate}
\item For $a=1,2$,
\begin{equation}\label{eq:n-derivative-identities}
 \partial_a n=k_a n^\perp,
 \qquad
 \partial_a n^\perp=-k_a n
 \qquad\text{on }\mathcal U_0.
\end{equation}

\item 
\begin{equation}\label{eq:k-curl}
 \curl k=\partial_1k_2-\partial_2k_1=0
 \qquad\text{pointwise on }\mathcal U_0.
\end{equation}

\end{enumerate}
\end{lemma}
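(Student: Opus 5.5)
The plan is to use only the pointwise constraint $|n|=1$ and the fact that $\{n,n^\perp\}$ is an orthonormal frame at each point of $\mathcal U_0$. Both assertions are elementary differential identities. No analytic input from Section~\ref{sec:exterior-phase-theorem} is needed, since $n$ is smooth on $\mathcal U_0$ by \eqref{eq:rho-n-def} and \eqref{eq:rho-lower}.

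For (1), we differentiate $n\cdot n=1$ to get $n\cdot\partial_a n=0$. Because $\{n,n^\perp\}$ is an orthonormal basis of $\mathbb R^2$ at every point, we can expand
\[
 \partial_a n=(n\cdot\partial_a n)\,n+(n^\perp\cdot\partial_a n)\,n^\perp=k_a n^\perp ,
\]
by the definition \eqref{eq:k-global}. Since $J$ is a constant linear map, $\partial_a n^\perp=J\partial_a n=k_a Jn^\perp=k_aJ^2n=-k_a n$, using $J^2=-I$.

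For (2), we compute $\partial_b k_a=(J\partial_b n)\cdot\partial_a n+(Jn)\cdot\partial_b\partial_a n$. The second-derivative term is symmetric in $a,b$ because $n$ is smooth, so it cancels in $\partial_1k_2-\partial_2k_1$. What remains is
\[
 \curl k=(J\partial_1 n)\cdot\partial_2 n-(J\partial_2 n)\cdot\partial_1 n .
\]
Since $J^\top=-J$, we have $(J\partial_2 n)\cdot\partial_1 n=-(J\partial_1n)\cdot\partial_2 n$, so $\curl k=2(J\partial_1n)\cdot\partial_2 n$. By (1), $\partial_1 n$ and $\partial_2 n$ are both multiples of $n^\perp$, hence parallel, and $(Jy)\cdot z=0$ for parallel vectors $y,z$. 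Explicitly, $(J\partial_1 n)\cdot\partial_2 n=k_1k_2\,(Jn^\perp)\cdot n^\perp=-k_1k_2\,n\cdot n^\perp=0$. Therefore $\curl k=0$ pointwise on $\mathcal U_0$.

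There is no genuine obstacle here; the only care needed is sign bookkeeping with $J$. It is worth remarking, for later use, that $k$ is globally defined on the non-simply-connected set $\mathcal U_0$ even though a global phase need not exist. Its circulation on $\partial B_s(0)$ equals $2\pi\deg(u,\infty)$, which is exactly the circulation mode handled by Proposition~\ref{prop:global-variational-comparison}.
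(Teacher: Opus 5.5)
Your proof is correct. Part (1) is exactly the paper's argument. For part (2) you take a different route. The paper works locally and topologically: it takes a simply connected neighborhood $U$ of each point and applies the covering-space lifting theorem for $p(t)=(\cos t,\sin t)$ to write $n=(\cos\phi,\sin\phi)$ with $\phi\in C^\infty(U)$. It then uses (1) to identify $k=\nabla\phi$ on $U$, and concludes from $\partial_{12}\phi=\partial_{21}\phi$. You instead differentiate the definition of $k$ directly and cancel the symmetric second-derivative term. This reduces $\curl k$ to $2(J\partial_1 n)\cdot\partial_2 n=2\det(\partial_1 n,\partial_2 n)$, which vanishes because, by (1), both partial derivatives are parallel to $n^\perp$.

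Your version is purely pointwise and algebraic and needs no topology. It is the identity $\curl(n\wedge\nabla n)=2\det(\partial_1 n,\partial_2 n)$ specialized to $S^1$-valued maps. The paper's version costs the lifting theorem but yields more: the local representation $k=\nabla\phi$ identifies $k$ as the gradient of a local phase. That representation is what underlies the formula $\deg(u,\infty)=\frac1{2\pi}\int_{\partial B_R(0)}k\cdot\tau\,\dd\mathcal H^1$ in the closing remark. Your final comment about the circulation tacitly relies on it. For the lemma as stated, and for every use of it in the proof of Theorem~\ref{thm:main}, your computation suffices.
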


\begin{proof}
Since $|n|^2=1$, we have 
$n\cdot\partial_a n=0$.  The pair $(n,Jn)=(n,n^\perp)$ is an oriented
orthonormal basis of the target.  Therefore, $\partial_a n$ has no component in the direction of  $n$, and its $n^\perp$ component is 
$(Jn)\cdot\partial_a n=k_a$.  This proves the first identity in
\eqref{eq:n-derivative-identities}.  Applying $J$ and using $J^2=-I$
we obtain the second one.

Let $U\subset\mathcal U_0$ be a simply connected domain.  The map
\[
 p\colon\mathbb{R}\to S^1,
 \qquad p(t):=(\cos t,\sin t),
\]
is the universal covering map of $S^1$. Applying the covering-space lifting
theorem to $n|_U\colon U\to S^1$, we get a continuous lift
$\phi\colon U\to\mathbb{R}$ satisfying $p\circ\phi=n$.  Because $p$ is a local
$C^\infty$ diffeomorphism and $n$ is smooth, we have $\phi\in C^\infty(U)$.
Differentiating $n=(\cos\phi,\sin\phi)$ we infer 
$ \partial_a n=(\partial_a\phi)n^\perp$. Comparison with \eqref{eq:n-derivative-identities} we have 
$k_a=\partial_a\phi$ for $a=1,2$, and $k=\nabla\phi$ on $U$. Any two lifts differ by a locally constant multiple of $2\pi$.
Since $U$ is connected, this multiple is constant.
Finally, equality of mixed derivatives gives
\[
\curl k= \partial_1k_2-\partial_2k_1
 =\partial_{12}\phi-\partial_{21}\phi=0
 \quad\text{on }U.
\]
Every point of $\mathcal U_0$ has a simply connected neighborhood, so
\eqref{eq:k-curl} follows on all of $\mathcal U_0$.
\end{proof}

Set
\begin{equation*}\label{eq:h-e-def}
 h:=1-|u|^2,
 \qquad
 e:=|\nabla u|^2.
\end{equation*}
The Bernstein estimate gives
\begin{equation}\label{eq:h-e-bounds}
 0\leq e\leq h\leq1
 \qquad\text{on }\mathcal U_0.
\end{equation}

\begin{lemma}
\label{lem:polar-equations}
Suppose that the functions $\rho$ and $n$ defined in \eqref{eq:rho-n-def} and the
field $k$ defined in \eqref{eq:k-global}. Then pointwise on
$\mathcal U_0$,
\begin{equation}\label{eq:polar-first-derivative}
 \partial_a u=(\partial_a\rho)n+\rho k_a n^\perp,
 \qquad\forall\,a\in\{1,2\}.
\end{equation}
Moreover,
\begin{equation}\label{eq:amplitude-phase}
 -\Delta\rho+\rho|k|^2=\rho h,
 \qquad
 \Div(\rho^2k)=0
 \qquad\text{on }\mathcal U_0,
\end{equation}
and
\begin{equation}\label{eq:polar-bernstein}
 |\nabla\rho|^2+\rho^2|k|^2
 =|\nabla u|^2=e\leq h
 \qquad\text{on }\mathcal U_0.
\end{equation}
In particular,
\begin{equation}\label{eq:k-zero}
 \lim_{R\to\infty}
 \sup_{\substack{x\in\Omega\\ |x|\geq R}}|k(x)|=0.
\end{equation}
\end{lemma}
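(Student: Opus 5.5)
We plan a direct computation in the polar frame $(n,n^\perp)$, using Lemma~\ref{lem:exterior-phase-decomposition} and the Bernstein estimate \eqref{eq:h-e-bounds}.

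\textbf{Step 1: first derivatives.} Differentiating $u=\rho n$ and substituting \eqref{eq:n-derivative-identities} gives
\[
\partial_a u=(\partial_a\rho)n+\rho\,\partial_a n=(\partial_a\rho)n+\rho k_a n^\perp ,
\]
which is \eqref{eq:polar-first-derivative}. Since $(n,n^\perp)$ is orthonormal, summing $|\partial_a u|^2$ over $a$ yields
\[
|\nabla u|^2=|\nabla\rho|^2+\rho^2|k|^2 .
\]
Combining this with \eqref{eq:h-e-bounds} gives \eqref{eq:polar-bernstein}.

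\textbf{Step 2: second derivatives.} We differentiate \eqref{eq:polar-first-derivative} once more in $x_a$ and sum over $a$. Using $\partial_a n=k_an^\perp$ and $\partial_a n^\perp=-k_an$, we obtain
\[
\Delta u=\bigl(\Delta\rho-\rho|k|^2\bigr)n+\bigl(2\nabla\rho\cdot k+\rho\Div k\bigr)n^\perp .
\]
The equation \eqref{eq:GL} reads $-\Delta u=\rho h\,n$, which has no $n^\perp$ component. Projecting onto $n$ gives
\[
-\Delta\rho+\rho|k|^2=\rho h .
\]
Projecting onto $n^\perp$ gives $2\nabla\rho\cdot k+\rho\Div k=0$. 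Multiplying the latter by $\rho$ gives $\Div(\rho^2k)=0$. This establishes \eqref{eq:amplitude-phase}; all identities hold pointwise since everything is smooth on $\mathcal U_0$.

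\textbf{Step 3: decay of $k$.} From \eqref{eq:polar-bernstein} and \eqref{eq:rho-lower},
\[
|k|^2\le \rho^{-2}h\le 4h=4(1-|u|^2).
\]
By \eqref{eq:modulus-limit}, $h\to0$ uniformly as $|x|\to\infty$, which gives \eqref{eq:k-zero}.

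The argument is routine algebra. The only point requiring care is Step 2: one must keep track of the cross terms correctly, in particular the factor $2$ in $2\partial_a\rho\,k_a$, which arises once from differentiating $\partial_a\rho\, n$ and once from differentiating $\rho k_a n^\perp$.
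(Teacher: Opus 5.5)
Your proposal is correct and follows the paper's proof essentially line by line. You use the same polar-frame expansion of $\Delta u$, project onto $n$ and $n^\perp$, and multiply the $n^\perp$-equation by $\rho$. You also take the same orthogonality-plus-Bernstein energy identity and the same bound $|k|^2\le\rho^{-2}h\le 4h$ for the decay. The only difference is that you state the energy identity before the second-derivative computation rather than after, which changes nothing.
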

\begin{proof}
By \eqref{eq:rho-n-def}, and
\eqref{eq:n-derivative-identities} we have 
\eqref{eq:polar-first-derivative}.  Differentiating once more and
summing over $a=1,2$, we obtain
\[
\begin{aligned}
 \Delta u
 &=\sum_{a=1}^2\partial_a
   \bigl((\partial_a\rho)n+\rho k_a n^\perp\bigr)\\
 &=\bigl(\Delta\rho-\rho|k|^2\bigr)n
   +\bigl(2\nabla\rho\cdot k+\rho\Div k\bigr)n^\perp.
\end{aligned}
\]
Since $u=\rho n$ and $\Delta u=-hu=-\rho h n$,
comparison of the coefficients in the orthonormal basis
$(n,n^\perp)$, we obtain 
\[
 -\Delta\rho+\rho|k|^2=\rho h,
 \qquad
 2\nabla\rho\cdot k+\rho\Div k=0.
\]
Multiplying the second identity by $\rho$ we get 
$ \Div(\rho^2k)=0$, which  proves \eqref{eq:amplitude-phase}.

Taking squared norms in \eqref{eq:polar-first-derivative} and using
$n\cdot n^\perp=0$ we infer 
\[
 |\partial_a u|^2=(\partial_a\rho)^2+\rho^2k_a^2.
\]
Summation over $a$ and Lemma~\ref{lem:basic-bounds} we prove
\eqref{eq:polar-bernstein}.  By \eqref{eq:rho-lower},
\[
 |k|^2\leq\rho^{-2}h\leq4h
 \qquad\text{on }\mathcal U_0.
\]
Since $h(x)=1-|u(x)|^2\to0$ uniformly as $|x|\to\infty$,
\eqref{eq:k-zero} follows.
\end{proof}

\subsection{The Jacobi quadratic form}
\label{subsec:jacobi-form}

Since $h(x)\to0$ and $k(x)\to0$ as $|x|\to\infty$, we can choose
$\widehat R_0\geq R_0+1$ such that
\[
 3h(x)+7|k(x)|^2\leq1
 \quad\forall\,x\in\mathbb{R}^2
 \text{ with }|x|>\widehat R_0.
\]
Replace $R_0$ by $\widehat R_0$ in
\eqref{eq:exterior-domains}, restrict $\rho$, $n$, and $k$ to the new
domains, and keep the notation $\mathcal U_0$ and $\Omega$.  Then
\begin{equation}\label{eq:small-coercive}
 3h+7|k|^2\leq1
 \qquad\text{on }\Omega.
\end{equation}
Equations \eqref{eq:rho-lower}, \eqref{eq:k-curl},
\eqref{eq:amplitude-phase}, \eqref{eq:polar-bernstein}, and
\eqref{eq:k-zero} remain valid on the restricted domains.

For $\zeta\in C^\infty(\Omega;\mathbb{R}^2)$, set
\begin{equation*}\label{eq:Jacobi-operator-def}
 L\zeta:=-\Delta\zeta-h\zeta+2(u\cdot\zeta)u.
\end{equation*}
For $\zeta\in C_c^\infty(\Omega;\mathbb{R}^2)$, define the associated quadratic
form
\begin{equation}\label{eq:Q-def}
 Q_\Omega(\zeta)
 :=\int_\Omega
 \bigl(|\nabla\zeta|^2-h|\zeta|^2+2(u\cdot\zeta)^2\bigr)\dd x.
\end{equation}
Integration by parts gives
\begin{equation*}\label{eq:Q-L-relation}
 Q_\Omega(\zeta)=\int_\Omega\zeta\cdot L\zeta\,\dd x.
\end{equation*}
Every $\zeta\in C_c^\infty(\Omega;\mathbb{R}^2)$ has the unique decomposition
\begin{equation}\label{eq:zeta-decomp}
 \zeta=\alpha n+\rho\beta n^\perp,
 \qquad
 \alpha:=n\cdot\zeta,
 \qquad
 \beta:=\rho^{-1}n^\perp\cdot\zeta,
\end{equation}
where $\alpha,\beta\in C_c^\infty(\Omega)$.

\begin{lemma}
\label{lem:coercivity}
Let $\zeta\in C_c^\infty(\Omega;\mathbb{R}^2)$ and let $\alpha,\beta$ be the
unique functions in \eqref{eq:zeta-decomp}.  Then
\begin{equation}\label{eq:Q-coercive}
 Q_\Omega(\zeta)
 \geq\frac12\int_\Omega
 \bigl(|\nabla\alpha|^2+\alpha^2
       +\rho^2|\nabla\beta|^2\bigr)\dd x.
\end{equation}
\end{lemma}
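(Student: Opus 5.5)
The plan is to expand $Q_\Omega(\zeta)$ in the moving frame $(n,n^\perp)$ using \eqref{eq:n-derivative-identities} and the polar equations \eqref{eq:amplitude-phase}. This should produce an exact identity in which the phase part appears as a pure $\rho^2|\nabla\beta|^2$ term. One cross term remains, and it is absorbed using the smallness condition \eqref{eq:small-coercive}.

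\emph{Step 1: pointwise expansion.} By \eqref{eq:n-derivative-identities},
\[
\partial_a\zeta=(\partial_a\alpha-\rho\beta k_a)\,n+\bigl(\alpha k_a+\partial_a(\rho\beta)\bigr)\,n^\perp ,
\]
so that
\[
|\nabla\zeta|^2=|\nabla\alpha|^2-2\rho\beta\,k\cdot\nabla\alpha+\rho^2\beta^2|k|^2+\alpha^2|k|^2+2\alpha\,k\cdot\nabla(\rho\beta)+|\nabla(\rho\beta)|^2 .
\]
For the zeroth-order terms I use $u\cdot\zeta=\rho\alpha$ and $|\zeta|^2=\alpha^2+\rho^2\beta^2$, which give $2(u\cdot\zeta)^2=2\rho^2\alpha^2$ and $h|\zeta|^2=h\alpha^2+h\rho^2\beta^2$.

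\emph{Step 2: the pure-$\beta$ terms.} I expand $|\nabla(\rho\beta)|^2=\rho^2|\nabla\beta|^2+\rho\nabla\rho\cdot\nabla(\beta^2)+\beta^2|\nabla\rho|^2$. Integrating by parts (legitimate since $\beta$ has compact support in $\Omega$) gives
\[
\int_\Omega\bigl(\rho\nabla\rho\cdot\nabla(\beta^2)+\beta^2|\nabla\rho|^2\bigr)\dd x=-\int_\Omega\beta^2\rho\Delta\rho\,\dd x .
\]
The first equation in \eqref{eq:amplitude-phase} says $-\rho\Delta\rho=\rho^2(h-|k|^2)$. Hence the terms $|\nabla(\rho\beta)|^2-h\rho^2\beta^2+\rho^2\beta^2|k|^2$ integrate to exactly $\int_\Omega\rho^2|\nabla\beta|^2\dd x$. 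This is the expected cancellation coming from phase invariance.

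\emph{Step 3: the cross terms.} I integrate $-2\int_\Omega\rho\beta\,k\cdot\nabla\alpha\,\dd x$ by parts, which gives $2\int_\Omega\alpha\,\Div(\rho\beta k)\,\dd x$. The second equation in \eqref{eq:amplitude-phase} implies $\Div(\rho k)=-\nabla\rho\cdot k$. Therefore
\[
\Div(\rho\beta k)=-\beta\,\nabla\rho\cdot k+\rho\,k\cdot\nabla\beta .
\]
Adding $2\alpha\,k\cdot\nabla(\rho\beta)=2\alpha\beta\,k\cdot\nabla\rho+2\alpha\rho\,k\cdot\nabla\beta$, the $\nabla\rho$ terms cancel. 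Using $\rho^2=1-h$, the result is
\[
Q_\Omega(\zeta)=\int_\Omega\Bigl(|\nabla\alpha|^2+(2-3h+|k|^2)\alpha^2+4\alpha\rho\,k\cdot\nabla\beta+\rho^2|\nabla\beta|^2\Bigr)\dd x .
\]

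\emph{Step 4: absorption.} Young's inequality gives $|4\alpha\rho\,k\cdot\nabla\beta|\le\frac12\rho^2|\nabla\beta|^2+8|k|^2\alpha^2$. The coefficient of $\alpha^2$ is then at least $2-3h-7|k|^2$, which is $\ge1$ by \eqref{eq:small-coercive}. This yields
\[
Q_\Omega(\zeta)\ge\int_\Omega\bigl(|\nabla\alpha|^2+\alpha^2+\tfrac12\rho^2|\nabla\beta|^2\bigr)\dd x,
\]
which is stronger than \eqref{eq:Q-coercive}.

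The main point of care is Step 3: one must use the divergence identity in the form $\rho\,\Div k=-2\nabla\rho\cdot k$ so that the $\beta\,\nabla\rho\cdot k$ terms cancel. Once they do, the remaining cross term is controlled by Young's inequality and the constants $3,7$ in \eqref{eq:small-coercive} match exactly.
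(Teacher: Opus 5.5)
Your proposal is correct and follows the paper's proof essentially step for step. It uses the same frame expansion of $\partial_a\zeta$, and the same cancellation of the pure-$\beta$ terms via the amplitude equation: you integrate $|\nabla(\rho\beta)|^2$ by parts directly, while the paper tests $-\Delta\rho+\rho(|k|^2-h)=0$ against $\rho\beta^2$, which is the same computation. It also uses the same reduction of the mixed terms to $4\int_\Omega\alpha\rho\,k\cdot\nabla\beta\,\dd x$ via $\Div(\rho^2k)=0$, and the same Young absorption with the constants in \eqref{eq:small-coercive}.
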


\begin{proof}
We first claim that
\begin{equation}\label{eq:Q-exact}
\begin{aligned}
 Q_\Omega(\zeta)
 =\int_\Omega\bigl(&
 |\nabla\alpha|^2+(|k|^2+2-3h)\alpha^2
 +\rho^2|\nabla\beta|^2
 +4\alpha\rho k\cdot\nabla\beta
 \bigr)\dd x.
\end{aligned}
\end{equation}
For each $a\in\{1,2\}$, by 
\eqref{eq:n-derivative-identities} we have 
\begin{equation}\label{eq:partial-zeta}
 \partial_a\zeta
 =\bigl(\partial_a\alpha-\rho\beta k_a\bigr)n
  +\bigl(\alpha k_a+\beta\partial_a\rho
          +\rho\partial_a\beta\bigr)n^\perp.
\end{equation}
Since $(n,n^\perp)$ is orthonormal, using 
\eqref{eq:partial-zeta} we get 
\begin{equation}\label{eq:gradient-zeta}
 |\nabla\zeta|^2
 =|\nabla\alpha-\rho\beta k|^2
  +|\alpha k+\beta\nabla\rho+\rho\nabla\beta|^2.
\end{equation}
Also,
\begin{equation}\label{eq:zeta-potential-components}
 |\zeta|^2=\alpha^2+\rho^2\beta^2,
 \qquad
 u\cdot\zeta=\rho\alpha.
\end{equation}

To treat the terms quadratic in $\beta$, we rewrite the first equation in
\eqref{eq:amplitude-phase} as
\begin{equation}\label{eq:rho-linear-equation}
 -\Delta\rho+\rho(|k|^2-h)=0.
\end{equation}
Multiplying \eqref{eq:rho-linear-equation} by
$\rho\beta^2\in C_c^\infty(\Omega)$ and
integrating by parts we have 
\[\begin{aligned}
 0 &=\int_\Omega
   \bigl[\nabla\rho\cdot\nabla(\rho\beta^2)
          +\rho^2\beta^2(|k|^2-h)\bigr]\dd x \\
 &=\int_\Omega
   \bigl[|\nabla(\rho\beta)|^2
          -\rho^2|\nabla\beta|^2
          +\rho^2\beta^2(|k|^2-h)\bigr]\dd x.
\end{aligned}\]
%In the second equality we used the pointwise identity
%\[
% \nabla\rho\cdot\nabla(\rho\beta^2)
% =|\nabla(\rho\beta)|^2-\rho^2|\nabla\beta|^2.\]
Thus
\begin{equation}\label{eq:beta-cancellation}
 \int_\Omega
 \bigl(|\nabla(\rho\beta)|^2
       +\rho^2\beta^2|k|^2-h\rho^2\beta^2\bigr)\dd x
 =\int_\Omega\rho^2|\nabla\beta|^2\dd x.
\end{equation}

The mixed terms in \eqref{eq:gradient-zeta} have integral
\begin{equation}\label{eq:mixed-before-ibp}
 I_{\mathrm{mix}}
 =2\int_\Omega
 \bigl(-\rho\beta k\cdot\nabla\alpha
       +\alpha\beta k\cdot\nabla\rho
       +\alpha\rho k\cdot\nabla\beta\bigr)\dd x.
\end{equation}
The second equation in \eqref{eq:amplitude-phase} and $\rho>0$ imply
\begin{equation}\label{eq:div-k-rho}
 \rho\Div k=-2k\cdot\nabla\rho.
\end{equation}
Since $\alpha$ and $\beta$ are compactly supported in $\Omega$, 
integration by parts we get 
\begin{align*}
 -2\int_\Omega\rho\beta k\cdot\nabla\alpha\,\dd x
 &=2\int_\Omega\alpha\Div(\rho\beta k)\,\dd x\\
 &=2\int_\Omega\alpha
   \bigl(\beta k\cdot\nabla\rho
         +\rho k\cdot\nabla\beta
         +\rho\beta\Div k\bigr)\dd x\\
 &=2\int_\Omega\alpha
   \bigl(-\beta k\cdot\nabla\rho
         +\rho k\cdot\nabla\beta\bigr)\dd x,
\end{align*}
where the last equality follows from \eqref{eq:div-k-rho}.  Substitution
into \eqref{eq:mixed-before-ibp} yields
\begin{equation}\label{eq:mixed-cancellation}
 I_{\mathrm{mix}}
 =4\int_\Omega\alpha\rho k\cdot\nabla\beta\,\dd x.
\end{equation}

The terms depending only on $\alpha$ in $Q_\Omega(\zeta)$ are
\[
 \int_\Omega
 \bigl(|\nabla\alpha|^2+(|k|^2-h+2\rho^2)\alpha^2\bigr)\dd x.
\]
Since $\rho^2=|u|^2=1-h$, we have
\begin{equation}\label{eq:alpha-coefficient}
 |k|^2-h+2\rho^2=|k|^2+2-3h.
\end{equation}
Combining \eqref{eq:gradient-zeta},
\eqref{eq:zeta-potential-components}, \eqref{eq:beta-cancellation},
\eqref{eq:mixed-cancellation}, and \eqref{eq:alpha-coefficient}, we prove 
\eqref{eq:Q-exact}.

For the coercive estimate, by Young's inequality we get 
\begin{equation}\label{eq:mixed-young}
 4|\alpha|\rho|k||\nabla\beta|
 \leq\frac12\rho^2|\nabla\beta|^2+8|k|^2\alpha^2.
\end{equation}
Using \eqref{eq:mixed-young} in \eqref{eq:Q-exact}, we obtain
\begin{align*}
 Q_\Omega(\zeta)
 &\geq\int_\Omega
 \left(|\nabla\alpha|^2
 +(2-3h-7|k|^2)\alpha^2
 +\frac12\rho^2|\nabla\beta|^2\right)\dd x.
\end{align*}
By \eqref{eq:small-coercive} we infer
$2-3h-7|k|^2\geq1$.  Hence
\[
 Q_\Omega(\zeta)
 \geq\int_\Omega
 \left(|\nabla\alpha|^2+\alpha^2
       +\frac12\rho^2|\nabla\beta|^2\right)\dd x,
\]
which implies \eqref{eq:Q-coercive}.
\end{proof}

\subsection{Square-integrability estimates for amplitude derivatives}
\label{subsec:amplitude-L2}

The amplitude itself does not belong to $L^2(\Omega)$ in general,
since $\rho(x)\to1$.  We instead estimate its first and second
derivatives.  %The latter estimate also gives $-\Delta\rho/\rho\in L^2(\Omega)$.

\begin{proposition}
\label{prop:transverse-energy}
Let $\Omega$ be the exterior domain fixed after
\eqref{eq:small-coercive}.  Then
\begin{equation}\label{eq:transverse-energy}
 \int_\Omega
 \bigl(|\nabla\rho|^2+|D^2\rho|^2+|\nabla k|^2\bigr)\dd x<\infty.
\end{equation}
\end{proposition}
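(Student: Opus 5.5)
The plan is to test the coercivity estimate of Lemma~\ref{lem:coercivity} on cut-off translational Jacobi fields $\zeta=\eta\,\partial_m u$, $m\in\{1,2\}$. Differentiating \eqref{eq:GL} in $x_m$ gives
\[
-\Delta\partial_m u=h\,\partial_m u-2(u\cdot\partial_m u)u ,
\]
so $L(\partial_m u)=0$ on $\Omega$. For a real cutoff $\eta\in C_c^\infty(\Omega)$ and any smooth $\psi$ with $L\psi=0$, the standard computation
\[
|\nabla(\eta\psi)|^2=|\nabla\eta|^2|\psi|^2+\eta^2|\nabla\psi|^2+\tfrac12\nabla(\eta^2)\cdot\nabla|\psi|^2 ,
\]
followed by integration by parts of the last term, gives
\[
Q_\Omega(\eta\psi)=\int_\Omega|\nabla\eta|^2|\psi|^2\dd x+\int_\Omega\eta^2\,\psi\cdot L\psi\dd x=\int_\Omega|\nabla\eta|^2|\psi|^2\dd x .
\]
By the Bernstein bound \eqref{eq:h-e-bounds}, $|\partial_m u|^2\leq e\leq h\leq1$. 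Hence $Q_\Omega(\eta\,\partial_m u)\leq\int_\Omega|\nabla\eta|^2\dd x$.

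For the cutoffs, take $\eta_R$ with $\eta_R=0$ for $|x|<R_0+1$ and for $|x|>2R$, and $\eta_R=1$ on $R_0+2<|x|<R$. Take $|\nabla\eta_R|\leq C$ on the inner transition layer and $|\nabla\eta_R|\leq C/R$ on $R<|x|<2R$. Then $\int|\nabla\eta_R|^2\leq C$ uniformly in $R$, because the outer annulus has area $\sim R^2$. This scale-invariance of the planar Dirichlet integral of the cutoff is exactly what makes the argument close. It works here only because $|\partial_m u|^2$ is bounded, which is why the Bernstein estimate is essential.

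Next, identify the decomposition \eqref{eq:zeta-decomp} of $\zeta=\eta\,\partial_m u$. By \eqref{eq:polar-first-derivative},
\[
\alpha=n\cdot\zeta=\eta\,\partial_m\rho,\qquad \beta=\rho^{-1}n^\perp\cdot\zeta=\eta\,k_m .
\]
Lemma~\ref{lem:coercivity} then yields
\[
\int_\Omega\bigl(|\nabla(\eta\partial_m\rho)|^2+\eta^2(\partial_m\rho)^2+\rho^2|\nabla(\eta k_m)|^2\bigr)\dd x\leq C .
\]
Expand with $|\nabla(\eta a)|^2\geq\frac12\eta^2|\nabla a|^2-|\nabla\eta|^2a^2$. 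The error terms satisfy $(\partial_m\rho)^2\leq h\leq1$ and $k_m^2\leq4h\leq4$ by \eqref{eq:polar-bernstein} and \eqref{eq:rho-lower}, so they are again bounded by $C\int|\nabla\eta_R|^2\leq C$. Using $\rho\geq\frac12$, we obtain
\[
\int_\Omega\eta_R^2\bigl(|\nabla\partial_m\rho|^2+(\partial_m\rho)^2+|\nabla k_m|^2\bigr)\dd x\leq C
\]
uniformly in $R$. Summing over $m$ and letting $R\to\infty$ by monotone convergence gives \eqref{eq:transverse-energy} on $\{|x|>R_0+2\}$. On the bounded smooth remainder $\{R_0<|x|\leq R_0+2\}$, all quantities are bounded because $\rho,k$ are smooth on $\mathcal U_0$.

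The main point requiring care is the vanishing of the Jacobi term: one must verify $L(\partial_m u)=0$ with exactly the sign conventions of $Q_\Omega$ in \eqref{eq:Q-def}, and justify the integration by parts. The latter is immediate since $\eta\,\partial_m u\in C_c^\infty(\Omega;\mathbb{R}^2)$. Everything else is bookkeeping with the pointwise bounds $h\leq1$ and $\rho\geq\frac12$.
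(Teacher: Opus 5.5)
Your proposal is correct and follows essentially the same route as the paper. You localize the translational Jacobi fields $\partial_m u$ by cutoffs and use the identity $Q_\Omega(\eta\,\partial_m u)=\int_\Omega|\nabla\eta|^2|\partial_m u|^2\dd x$, together with the Bernstein bound and the scale invariance of $\int|\nabla\eta_R|^2$. You then identify $\alpha=\eta\,\partial_m\rho$ and $\beta=\eta\,k_m$, apply Lemma~\ref{lem:coercivity}, absorb the cutoff errors using $\rho\geq\frac12$, and let $R\to\infty$. The only differences are cosmetic: you bound $|\partial_m u|^2\leq1$ directly instead of through $\sup_{\Acal(R,2R)}h$, and you pass to the limit by monotone convergence over the sets $\{R_0+2<|x|<R\}$, where $\eta_R=1$, rather than by Fatou's lemma.
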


\begin{proof}
For $j\in\{1,2\}$, set $w_j:=\partial_j u$.  Differentiating
$-\Delta u=hu$ and using
$\partial_j h=-2u\cdot\partial_j u=-2u\cdot w_j$ gives
\begin{equation}\label{eq:Jacobi}
 Lw_j=-\Delta w_j-hw_j+2(u\cdot w_j)u=0
 \qquad\text{pointwise in }\Omega.
\end{equation}
Let $\chi\in C_c^\infty(\Omega)$.  Multiplying
\eqref{eq:Jacobi} by $\chi^2w_j$, integrating over $\Omega$ and using integration by parts, we have
\begin{equation}\label{eq:IMS}
 Q_\Omega(\chi w_j)
 =\int_\Omega|\nabla\chi|^2|w_j|^2\dd x.
\end{equation}

By \eqref{eq:polar-first-derivative}, we have
\begin{equation}\label{eq:wj-decomp}
 w_j=(\partial_j\rho)n+\rho k_j n^\perp.
\end{equation}
Therefore, in the decomposition \eqref{eq:zeta-decomp} of
$\chi w_j$, one has
\begin{equation}\label{eq:chi-wj-components}
 \alpha=\chi\,\partial_j\rho,
 \qquad
 \beta=\chi k_j.
\end{equation}

Choose $\eta\in C^\infty(\Omega;[0,1])$ such that
\begin{equation*}\label{eq:inner-cutoff}
\begin{gathered}
 \eta(x)=0\quad\text{if }R_0<|x|\leq R_0+\frac12,
 \qquad
 \eta(x)=1\quad\text{if }|x|\geq R_0+1,\\
 \supp\nabla\eta\Subset\Omega.
\end{gathered}
\end{equation*}
Choose $\xi\in C^\infty([0,\infty);[0,1])$ satisfying
\begin{equation*}\label{eq:outer-cutoff-profile}
 \xi(t)=1\quad\forall\,t\in[0,1],
 \qquad
 \xi(t)=0\quad\forall\,t\in[2,\infty),
\end{equation*}
and define, for $R>0$,
\begin{equation*}\label{eq:cutoff-def}
 \xi_R(x):=\xi\left(\frac{|x|}{R}\right),
 \qquad
 \chi_R:=\eta\xi_R.
\end{equation*}
Let $R_\ast>2(R_0+1)$.  For $R\geq R_\ast$,
$\chi_R\in C_c^\infty(\Omega)$, the supports of $\nabla\eta$ and
$\nabla\xi_R$ are disjoint, and
\[
 |\nabla\xi_R|\leq\frac{C}{R},
 \qquad
 \supp\nabla\xi_R\subset\overline{\Acal(R,2R)}.
\]
Moreover, $\xi_R=1$ on $\supp\nabla\eta$ and
$\eta=1$ on $\supp\nabla\xi_R$.  It follows that
\begin{align}
 \int_\Omega|\nabla\chi_R|^2e\,\dd x
 &\leq
 \underbrace{\int_{\supp\nabla\eta}|\nabla\eta|^2e\,\dd x}_{=:C_{\mathrm{in}}}
 +\frac{C}{R^2}\int_{\Acal(R,2R)}e\,\dd x \notag\\
 &\leq C_{\mathrm{in}}
 +\frac{C}{R^2}\mathcal L^2(\Acal(R,2R))
      \sup_{\Acal(R,2R)}h \notag\\
 &\leq C_{\mathrm{in}}+C\sup_{\Acal(R,2R)}h
 \leq C_0,
 \label{eq:cutoff-error}
\end{align}
where we used \eqref{eq:h-e-bounds},
$\mathcal L^2(\Acal(R,2R))=3\pi R^2$, and
$\lim_{|x|\to\infty}h(x)=0$.  The positive constants $C$ and $C_0$ are
independent of $R\geq R_\ast$, and $C_{\mathrm{in}}<\infty$ because
$\supp\nabla\eta\Subset\Omega$ and $e\in C^\infty(\Omega)$.

We apply \eqref{eq:Q-coercive} and \eqref{eq:IMS} to $\chi_R w_j$.
In view of \eqref{eq:chi-wj-components}, $|w_j|^2\leq e$, and \eqref{eq:cutoff-error}, we obtain 
\begin{equation}\label{eq:coercive-cutoff-bound}
 \int_\Omega\bigl(
 |\nabla(\chi_R\partial_j\rho)|^2
 +\chi_R^2|\partial_j\rho|^2
 +\rho^2|\nabla(\chi_R k_j)|^2\bigr)\dd x
 \leq C_1,
\end{equation}
where $C_1$ is independent of $R\geq R_\ast$.

By the product rule and the inequality
$|a-b|^2\leq2|a|^2+2|b|^2$, we have
\begin{align}
 \chi_R^2|\nabla\partial_j\rho|^2
 &\leq2|\nabla(\chi_R\partial_j\rho)|^2
      +2|\partial_j\rho|^2|\nabla\chi_R|^2,
 \label{eq:rho-cutoff-expand}\\
 \rho^2\chi_R^2|\nabla k_j|^2
 &\leq2\rho^2|\nabla(\chi_R k_j)|^2
      +2\rho^2k_j^2|\nabla\chi_R|^2.
 \label{eq:k-cutoff-expand}
\end{align}
Furthermore, \eqref{eq:wj-decomp} and orthogonality imply
\begin{equation}\label{eq:wj-energy-component}
 |\partial_j\rho|^2+\rho^2k_j^2=|w_j|^2\leq e.
\end{equation}
Combining \eqref{eq:cutoff-error},
\eqref{eq:coercive-cutoff-bound},
\eqref{eq:rho-cutoff-expand}, \eqref{eq:k-cutoff-expand}, and
\eqref{eq:wj-energy-component}, we obtain
\begin{equation}\label{eq:fatou-uniform}
 \sup_{R\geq R_\ast}\int_\Omega\chi_R^2
 \bigl(|\partial_j\rho|^2+|\nabla\partial_j\rho|^2
       +\rho^2|\nabla k_j|^2\bigr)\dd x<\infty.
\end{equation}

Choose any sequence $R_m\to\infty$ with $R_m\geq R_\ast$.  For each
fixed $x\in\Omega$, one has $\xi_{R_m}(x)\to1$ and hence
$\chi_{R_m}(x)\to\eta(x)$.  Fatou's lemma applied to the nonnegative
integrands in \eqref{eq:fatou-uniform} yields
\begin{equation}\label{eq:eta-energy-bound}
 \int_\Omega\eta^2
 \bigl(|\partial_j\rho|^2+|\nabla\partial_j\rho|^2
       +\rho^2|\nabla k_j|^2\bigr)\dd x<\infty.
\end{equation}
On $\{x\in\mathbb{R}^2:|x|\geq R_0+1\}$, one has $\eta=1$ and
$\rho\geq\frac12$.  The annulus
$\{x\in\mathbb{R}^2:R_0<|x|<R_0+1\}$ has compact closure in $\mathcal U_0$, where
$\rho$ and $k$ are smooth.  Therefore \eqref{eq:eta-energy-bound} implies
\[
 \partial_j\rho\in L^2(\Omega),
 \qquad
 \nabla\partial_j\rho\in L^2(\Omega;\mathbb{R}^2),
 \qquad
 \nabla k_j\in L^2(\Omega;\mathbb{R}^2).
\]
Summing over $j=1,2$ gives \eqref{eq:transverse-energy}.
\end{proof}

\begin{corollary}
\label{cor:forced-phase}
Define
\begin{equation}\label{eq:sigma-def}
 \sigma:=h-|k|^2
\end{equation}
and set $F:=\sigma k$.  Then
\begin{equation}\label{eq:forced-phase-corollary-conclusion}
 \sigma\in L^2(\Omega),
 \quad
 F\in L^2(\Omega;\mathbb{R}^2),
 \quad
 \Div\bigl((1-|k|^2)k-F\bigr)=0
 \quad\text{in }\mathcal D'(\Omega).
\end{equation}
\end{corollary}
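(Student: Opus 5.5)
The plan is to read off all three assertions from the amplitude equation in \eqref{eq:amplitude-phase} together with Proposition~\ref{prop:transverse-energy}. For the phase equation, $\rho^2=|u|^2=1-h$ on $\Omega$. Hence
\[
\rho^2k=(1-h)k=(1-|k|^2)k-(h-|k|^2)k=(1-|k|^2)k-\sigma k .
\]
The second identity in \eqref{eq:amplitude-phase} therefore reads $\Div\bigl((1-|k|^2)k-F\bigr)=0$ pointwise on $\mathcal U_0$. Since all fields are smooth, this holds in particular in $\mathcal D'(\Omega)$. This step is purely algebraic.

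For $\sigma\in L^2(\Omega)$, I rewrite the first identity in \eqref{eq:amplitude-phase} as $\rho(h-|k|^2)=-\Delta\rho$, that is, $\sigma=-\Delta\rho/\rho$ on $\Omega$. By \eqref{eq:rho-lower}, $\rho\ge\frac12$, so $|\sigma|\le 2|\Delta\rho|\le 2\sqrt2\,|D^2\rho|$ pointwise. Proposition~\ref{prop:transverse-energy} gives $D^2\rho\in L^2(\Omega)$, and hence $\sigma\in L^2(\Omega)$.

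Finally, $F=\sigma k$. By \eqref{eq:polar-bernstein} and \eqref{eq:h-e-bounds}, $|k|^2\le \rho^{-2}h\le 4$ on $\Omega$, so $k$ is bounded. Therefore $|F|\le 2|\sigma|$ and $F\in L^2(\Omega;\mathbb R^2)$.

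There is no real obstacle here. The only substantive input is the $L^2$ bound on $D^2\rho$, which was the hard work of Proposition~\ref{prop:transverse-energy} via the Jacobi coercivity of Lemma~\ref{lem:coercivity}. The one point to be careful about is to use the identity $\rho^2=1-h$, rather than any approximation, so that the forcing term is exactly $\sigma k$.
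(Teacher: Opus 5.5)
Your proposal is correct and follows the paper's proof essentially step for step. You write $\sigma=-\Delta\rho/\rho$, use $\rho\ge\frac12$ and $|\Delta\rho|\le\sqrt2\,|D^2\rho|$ together with Proposition~\ref{prop:transverse-energy}, bound $|k|\le 2$ to obtain $F\in L^2$, and use the exact identity $\rho^2=1-|k|^2-\sigma$ to rewrite $\Div(\rho^2k)=0$ as $\Div\bigl((1-|k|^2)k-F\bigr)=0$.
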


\begin{proof}
Since $h$, $k$, and $\rho$ are smooth on $\mathcal U_0$ and
$\rho$ is strictly positive,
$\sigma\in C^\infty(\mathcal U_0)$.  Rearranging the first identity in
\eqref{eq:amplitude-phase}, we obtain the pointwise identity
\begin{equation}\label{eq:sigma-amplitude}
 \sigma=-\frac{\Delta\rho}{\rho}\qquad\text{in }\Omega.
\end{equation}
Since $|\Delta\rho|\leq\sqrt2\,|D^2\rho|$,
Proposition~\ref{prop:transverse-energy},
\eqref{eq:sigma-amplitude}, and $\rho\geq\frac12$ give
\[
 \|\sigma\|_{L^2(\Omega)}
 \leq2\sqrt2\,\|D^2\rho\|_{L^2(\Omega)}<\infty.
\]
Moreover, \eqref{eq:polar-bernstein}, \eqref{eq:rho-lower}, and
\eqref{eq:h-e-bounds} imply
\[
 |k|^2\leq\rho^{-2}h\leq4
 \qquad\text{on }\Omega.
\]
Consequently, we have
\[
 \|F\|_{L^2(\Omega;\mathbb R^2)}
 \leq\|k\|_{L^\infty(\Omega)}
      \|\sigma\|_{L^2(\Omega)}<\infty.
\]
It follows from \eqref{eq:sigma-def} and
$\rho^2=|u|^2=1-h$ that $\rho^2=1-|k|^2-\sigma$. By the definition of $A$ in
\eqref{eq:autonomous-flux-potential} and the identity $F=\sigma k$,
we have 
\[
 A(k)-F
 =\bigl(1-|k|^2-\sigma\bigr)k
 =\rho^2k
 \qquad\text{on }\Omega.
\]
Hence, the second identity in \eqref{eq:amplitude-phase} gives
$\Div(A(k)-F)=0$ in $\mathcal D'(\Omega)$.  As a result,
\[
 \int_\Omega A(k)\cdot\nabla\varphi\,\dd x
 =\int_\Omega F\cdot\nabla\varphi\,\dd x
 \quad\forall\,\varphi\in C_c^\infty(\Omega).
\]
\end{proof}

\subsection{Proof of Theorem~\ref{thm:main}}
\label{subsec:proof-open-problem}

\begin{proof}[Proof of Theorem~\ref{thm:main}]
Let $\mathcal U_0$ and $\Omega$ be the exterior domains given in \eqref{eq:exterior-domains}.  The field $k$ is smooth on the open
neighborhood $\mathcal U_0$ of $\overline\Omega$. Furthermore, \eqref{eq:k-curl} gives $\curl k=0$, \eqref{eq:k-zero} gives the required uniform decay, and Corollary~\ref{cor:forced-phase} gives $F\in L^2(\Omega;\mathbb R^2)$ and $\Div(A(k)-F)=0$ in $\mathcal D'(\Omega)$.  Thus, every hypothesis of Theorem~\ref{thm:exterior-phase} is satisfied. According to Theorem~\ref{thm:exterior-phase}, we obtain  $k\in L^4(\Omega;\mathbb R^2)$ and  $|k|^2\in L^2(\Omega)$.  Combining  \eqref{eq:sigma-def} with
Corollary~\ref{cor:forced-phase}, we get
\[
 h=|k|^2+\sigma\in L^2(\Omega).
\]
Since $u\in C^\infty(\mathbb{R}^2;\mathbb{R}^2)$, we have
\[
 \int_{B_{R_0}(0)}h^2\dd x<\infty.
\]
Combining this with $h\in L^2(\Omega)$, we conclude that
$h\in L^2(\mathbb{R}^2)$. Hence,
\[
 \int_{\mathbb{R}^2}(1-|u|^2)^2\dd x
 =\int_{\mathbb{R}^2}h^2\dd x<\infty.
\]
\end{proof}

\begin{remark}
For every sufficiently large $R$, the map $u$ has no zeros on
$\partial B_R(0)$, and
\[
 \deg(u,\infty):=
 \deg\!\left(
 \left.\frac{u}{|u|}\right|_{\partial B_R(0)}
 \right)
 =\frac1{2\pi}
 \int_{\partial B_R(0)}k\cdot\tau\,\dd\mathcal H^1
\]
is independent of $R$.  Combining Theorem~\ref{thm:main} with the
quantization theorem of Brezis--Merle--Rivi\`ere
\cite[Theorem~1 and Remark~1.1]{BMR} gives
\[
 \int_{\mathbb R^2}(1-|u|^2)^2\dd x
 =2\pi\deg(u,\infty)^2.
\]
\end{remark}

\bigskip

\section*{\bf Acknowledgements}
Hong-Ge Chen was supported by the National Natural Science Foundation of China (Grant Nos.~12201607 and~12571249) and the Postdoctoral Project of Hubei Province (Grant No.~2024HBBHXF095). Juncheng Wei was supported by National R\&D Program of China (Grant No. 2022YFA1005602), and Hong Kong General Research Fund ``New frontiers in singular limits of nonlinear partial differential equation". Wen Yang was supported by the National Key Research and Development Program of China (Grant No.~2022YFA1006800), the National Natural Science Foundation of China
(Grant Nos. 12271369 and~12531010), the Science and Technology Development Fund of the Macao SAR (FDCT, Grant No.~0070/2024/RIA1), the
Start-up Research Grant of the University of Macau (Grant No.~SRG2023-00067-FST), the Multi-Year Research Grants of the University
of Macau (Grant Nos.~MYRG-GRG2024-00082-FST-UMDF and MYRG-GRG2025-00051-FST), and the University of Macau Development
Foundation (Grant No.~TISF/2025/006/FST). 
\medskip

\noindent {\bf Data availability statement}: There are no new data associated with this article.
\medskip

\noindent {\bf AI assistance statement}: The authors used OpenAI models to assist with language polishing and manuscript editing.

\end{document}